\documentclass[10pt, letterpaper,reqno]{amsart}
\usepackage[left=1in,right=1in,bottom=0.7in,top=0.7in]{geometry}
\usepackage{amsfonts}
\usepackage{amsmath, amssymb}
\usepackage{graphicx}
\usepackage[font=small,labelfont=bf]{caption}
\usepackage{epstopdf}
\usepackage{xcolor}
\usepackage{amsthm}
\usepackage{float}
\usepackage{enumitem}
\usepackage{pgfplots}
\usepackage{listings}
\usepackage{longtable}
\usepackage{mathrsfs}
\usepackage{dsfont}
\usepackage{mathtools}
\usepackage{dsfont}
\usepackage{stmaryrd}
\usepackage[pdfpagelabels,hyperindex]{hyperref}
\hypersetup{linkbordercolor=green}

\newcommand*{\rom}[1]{\expandafter\@slowromancap\romannumeral #1@}
 \colorlet{lgray}{white!80!black}
\colorlet{lred}{white!85!red}
\colorlet{lgreen}{white!60!green}
\colorlet{dgreen}{black!30!green}
\colorlet{lpurple}{white!60!purple}
\colorlet{lblue}{white!60!blue}
\definecolor{green}{rgb}{0.1,0.8,0.1}
\definecolor{yellow}{rgb}{1.0,0.85,0.25}
\definecolor{purple}{rgb}{1.0, 0, 1.0}
\definecolor{blue}{rgb}{0, 0, 1.0}
\tikzstyle{unfused}=[lgray, line width=1.5pt, ->]
\tikzstyle{fused}=[lgray, line width=4pt, ->]
\tikzstyle{dual}=[black, line width=1pt, dashed]
\tikzstyle{lightdual}=[black, line width=0.5pt, dashed]
\tikzstyle{cut}=[black, line width=1.0pt]

\theoremstyle{plain}
\newtheorem{theorem}{Theorem}[section]
\newtheorem{lemma}[theorem]{Lemma}
\newtheorem{proposition}[theorem]{Proposition}

\theoremstyle{remark}
\newtheorem{definition}[theorem]{Definition}

\colorlet{shadecolor}{gray!20}
\pgfplotsset{compat=1.9}
\usetikzlibrary{shapes.multipart}
\usetikzlibrary{patterns}
\usetikzlibrary{shapes.multipart}
\usetikzlibrary{arrows}
\usetikzlibrary{decorations.markings}
\usepgflibrary{decorations.shapes}
\usetikzlibrary{decorations.shapes}
\usepgflibrary{shapes.symbols}
\usetikzlibrary{shapes.symbols}
\usetikzlibrary{decorations.pathreplacing}
\tikzstyle{fleche}=[>=stealth', postaction={decorate}, thick]
\tikzstyle{axis}=[->, >=stealth', thick, gray]
\tikzstyle{paths}=[>->, >=stealth', thick]
\tikzstyle{path}=[->, >=stealth', thick]
\tikzstyle{grille}=[dotted, gray]
\DeclareFontFamily{U}{mathx}{}
\DeclareFontShape{U}{mathx}{m}{n}{<-> mathx10}{}
\DeclareSymbolFont{mathx}{U}{mathx}{m}{n}
\DeclareMathAccent{\widehat}{0}{mathx}{"70}
\DeclareMathAccent{\widecheck}{0}{mathx}{"71}

\usepackage{romannum}
\usepackage{upgreek}

\usepackage[scr=boondoxo]{mathalfa}

\newcommand{\lrw}{\mathsf{LRW}}
\newcommand{\bg}{\boldsymbol{\gamma}}
\newcommand{\bs}{\boldsymbol{\sigma}} 

\renewcommand{\a}{\mathsf{a}}
\renewcommand{\b}{\mathsf{b}}

\usepgflibrary{fpu}
\makeatletter
\let\NAT@parse\undefined
\makeatother

\author{Dominik Schmid}
\address
{
Dominik Schmid, 
University of Augsburg, Germany 
}
\email{d.schmid@uni-a.de} 
\author{Zongrui Yang}
\address
{
Zongrui Yang, 
University of California, Berkeley
}
\email{zongrui@berkeley.edu}

\title[Stationary measures of the open KPZ equation]{Stationary measures of the open KPZ equation: A short proof}  
\begin{document}
\pagenumbering{arabic}

\begin{abstract}  
In this note, we give a short probabilistic proof of the two-layer Brownian representation of the stationary measure of the open Kardar--Parisi--Zhang (KPZ) equation on a finite interval with arbitrary Neumann boundary parameters, conjectured by Barraquand and Le Doussal and recently established in full generality by Dunlap, Gu and Rosati. Our starting point is Bryc's two-layer Gibbs representation of the stationary measure of the open asymmetric simple exclusion process (ASEP) under Liggett's condition, together with our previously established probabilistic interpretation of the associated Bryc polynomials in terms of inhomogeneous birth--death Markov chains. Martingale estimates yield uniform asymptotics for these polynomials under weakly asymmetric scaling. Combined with known convergence results from open ASEP to open KPZ, these estimates recover the stationary height profile of the open KPZ equation as the first marginal of a reweighted pair of Brownian motions.
\end{abstract}
\maketitle

%
\section{Introduction and main results}

\subsection{Preface}

The Kardar--Parisi--Zhang (KPZ) equation, introduced in \cite{Kardar1986Dynamic}, describes the growth of random interfaces. On a finite interval with Neumann boundary conditions, it is known as the open KPZ equation \cite{corwin2018open}. A basic problem is to identify its stationary measure and understand how it depends on the boundary parameters. Here, stationarity concerns the height function relative to its value at the left endpoint of the interval. We refer to \cite{corwin2022some} for a survey.

The open KPZ equation arises as a weakly asymmetric scaling limit of the open asymmetric simple exclusion process (ASEP), as established by Corwin and Shen \cite{corwin2018open} and extended to general boundary parameters by Parekh \cite{parekh2017kpz}; see \cite{bertini1997stochastic} for the classical full-space result. This connection provides a route to its stationary measure through the stationary measures of the open ASEP. Corwin and Knizel \cite{corwin2021stationary} first characterized the stationary measures of the open KPZ equation through their multipoint Laplace transforms in the fan region, i.e., when the sum of the boundary parameters is nonnegative. \cite{bryc2021markov,barraquand2021steady} inverted these Laplace transforms and obtained the reweighted two-layer Brownian representation in the fan region. \cite{barraquand2023stationary,himwich2024stationary} also developed a separate derivation in the fan region through reweighted random walks.

Barraquand and Le Doussal \cite{barraquand2021steady} conjectured that the Brownian representation remains valid for all real boundary parameters. Barraquand, Corwin, and the second author of this paper \cite{barraquand2024stationary} proved this conjecture modulo an expected convergence from the log-gamma polymer on a strip to the open KPZ equation.
Recently, Dunlap, Gu and Rosati \cite{DGR:openKPZ} proved this conjecture by stochastic analytic methods. Together with the uniqueness of the stationary measure established in \cite{parekh2022ergodicity,knizel2022strong}, this gives a complete characterization. Very recently, Himwich and Wang \cite{himwich2026laplace} also obtained Laplace-transform formulas for a range of parameters in the shock region by analytic continuation.

In this note, we give a short probabilistic proof of the two-layer Brownian representation for every interval length $L>0$ and all real boundary parameters. Our starting point is Bryc's two-layer Gibbs representation of the stationary measure of the open ASEP under Liggett's condition \cite{bryc2024stationary}, together with our probabilistic interpretation of the associated Bryc polynomials in terms of inhomogeneous birth--death chains \cite{SchmidYangShock}. Martingale estimates lead to uniform asymptotics for these polynomials under weakly asymmetric scaling. An invariance principle and uniform integrability then identify the limiting two-layer measure as the law of a reweighted pair of Brownian motions. Combining with the convergence results from open ASEP to open KPZ \cite{corwin2018open,parekh2017kpz}, we are able to show that its first marginal is stationary for the open KPZ equation. We point out that, although the discrete approximation satisfies Liggett's condition, the scaling limit covers all real KPZ boundary parameters.

\subsection{The open KPZ equation and its stationary measure}\label{subsec:model and results}

Let us start by formally describing the open KPZ equation. 
For $L>0$, the open KPZ equation on $[0,L]$ with inhomogeneous Neumann boundary conditions and parameters $u,v\in \mathbb R$ is the stochastic PDE 
\begin{equation}
\begin{cases} 
    \partial_t h(t,x) = \frac{1}{2} \partial_{xx} h(t,x) + \frac{1}{2} \left( \partial_x h(t,x)\right)^2 + \xi(t,x), \\ 
    \partial_x h(t,x)\big\vert_{x=0} = u, \\ 
    \partial_x h(t,x)\big\vert_{x=L} = -v, 
    \end{cases}
    \tag{KPZ$_{u,v}$}
    \label{eq:KPZ}
\end{equation}
where $h$ is a function of  $t\in \mathbb R_+$ and $x\in [0,L]$, and $\xi(t,x)$ is a space-time white noise. 
In the following, let $C(E,F)$ denote the space of continuous functions from $E$ to $F$, equipped with the uniform topology. We call the law of a random function $h_0\in C([0,L],\mathbb R)$ with $h_0(0)=0$ the stationary measure for the open KPZ equation on $[0,L]$ if the solution to \eqref{eq:KPZ} with initial condition $h(0,x)=h_0(x)$ is such that the law of $h_t:=\big(h(t,x)-h(t,0)\big)_{x\in [0,L]}$ is the same for all $t\geq 0$. 

Let us recall the definition of a Hopf-Cole solution to the open KPZ equation; see, for example, \cite{corwin2018open}. 
\begin{definition}
\label{def:Hopf-Cole transform}
 Fix boundary parameters $u,v\in \mathbb R$. A stochastic process $h(t,x)\in C(\mathbb R_{\geq 0}\times [0,L], \mathbb R)$  is called a (Hopf-Cole) solution to the open KPZ equation \eqref{eq:KPZ} with initial condition $h(0,x)= h_0(x)$ if $h(t,x)=\log Z(t,x)$, where $Z(t,x)$ is the solution of the  stochastic heat equation with multiplicative noise
\begin{equation}
\begin{cases} 
    \partial_t Z(t,x) = \frac{1}{2} \partial_{xx} Z(t,x) +  Z(t,x) \xi(t,x), \\ 
    \partial_x Z(t,x)\Big\vert_{x=0} = \mu Z(t,0), \\ 
    \partial_x h(t,x)\Big\vert_{x=L} = \nu Z(t,0), 
    \end{cases}
    \tag{SHE$_{u,v}$}
    \label{eq:SHE}
\end{equation}
with $\mu=u-1/2$, $\nu=-(v-1/2)$ and  initial condition $Z(0,x)=e^{h_0(x)}$.  A stochastic process $Z(t,x)\in C(\mathbb R_{\geq 0}\times [0,L], \mathbb R_{>0})$ is called a solution to \eqref{eq:SHE} if for all $t> 0$ and $x\in [0,L]$, it is measurable with respect to the filtration of the space-time white noise  $\xi$ generated up to and including time $t$ and if it satisfies
\begin{equation}
    Z(t,x) = \int_{0}^L p_{\mu, \nu}(0,y; t,x)Z(0,y) d y + \int_0^t \int_0^L  p_{\mu, \nu} (s,y; t,x) Z(s,y)\xi(d s, d y),
\end{equation}
where the last integral is a stochastic integral in time in the Itô sense. Here, $p_{\mu, \nu}(s,y; t,x)$ is the heat kernel on $[0,L]$ with Robin type boundary condition, i.e.,~the unique solution to 
\begin{equation}\label{eq:heat kernel}
    \begin{cases}
    \partial_t p_{\mu, \nu}(s,y; t,x) = \frac 1 2 \partial_{xx} p_{\mu, \nu}(s,y; t,x),\\
    \partial_x p_{\mu, \nu}(s,y; t,x) \Big\vert_{x=0}=\mu p_{\mu, \nu}(s,y; t,0) \text{ for all }s<t, y\in [0,L],\\ 
        \partial_x p_{\mu, \nu}(s,y; t,x) \Big\vert_{x=L}=\nu p_{\mu, \nu}(s,y; t,L) \text{ for all }s<t, y\in [0,L],\\
        \lim_{t\to s}  p_{\mu, \nu}(s,y; t,x)=\delta(x-y) \text{ in the weak sense on }\mathbb L^2([0,L]). 
    \end{cases}
\end{equation}
\end{definition}

It is shown by \cite[Proposition 2.7]{corwin2018open} and \cite[Proposition 4.2]{parekh2017kpz} that, for all $u,v\in\mathbb R$, the stochastic heat equation \eqref{eq:SHE} admits a unique solution that is almost surely positive, subject to a finite second moment condition. In particular, its logarithm is finite, hence giving indeed a solution of the open KPZ equation \eqref{eq:KPZ}. Next, we introduce the Hariya-Yor process from \cite{HariyaYor2004}, which allows us to describe the stationary measures of the open KPZ equation.

\begin{definition} \label{def:KPZstat}
Fix \(L>0\).  For each $v_1,v_2\in \mathbb R$, let  $\mathbb P_{\mathrm{BM}}^{v_1,v_2}$  be the probability measure on $C([0,L], \mathbb R)^2$  of  two independent standard Brownian motions $B_1,B_2$ on $[0,L]$ with drifts $v_1$ and $v_2$ respectively.
For $u,v\in \mathbb R$ define the probability measure  $\mathbb P_{\mathrm{KPZ}}^{u,v}$  on $C([0,L], \mathbb R)$ through its Radon-Nikodym derivative by 
\begin{equation}
\label{eq:PstatKPZ}
\frac{d \mathbb P_{\mathrm{KPZ}}^{u,v}}{d\mathbb P_{\mathrm{BM}}^{0,0} }(B_1,B_2) =  \frac{1}{\mathcal Z_{\mathrm{KPZ}}^{u,v}}\left( \int_0^L d s e^{-(B_1(s)-B_2(s))}\right)^{-u-v}e^{-v(B_1(L)-B_2(L))}. 
\end{equation}
By the Cameron-Martin theorem, we may also include drifts in the reference measure and define $ \mathbb P_{\mathrm{KPZ}}^{u,v}$ as 
\begin{equation}
\label{eq:PstatKPZbis}
\frac{d \mathbb P_{\mathrm{KPZ}}^{u,v}}{d\mathbb P_{\mathrm{BM}}^{-v,v}}(B_1,B_2) =  \frac{e^{v^2L}}{\mathcal Z_{\mathrm{KPZ}}^{u,v}}\left( \int_0^L d s e^{-(B_1(s)-B_2(s))}\right)^{-u-v}. 
\end{equation}
\end{definition}

 We have the following characterization of the stationary measure of the open KPZ equation \eqref{eq:KPZ}.

\begin{theorem}[c.f. Theorem 1.1 in \cite{DGR:openKPZ}]\label{thm:main open KPZ stat}
    For any $L>0$ and $u,v\in \mathbb R$, the law of $B_1$ under $\mathbb P_{\mathrm{KPZ}}^{u,v}$ is the (unique ergodic) stationary measure for \eqref{eq:KPZ}. 
\end{theorem}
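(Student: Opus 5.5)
Our plan follows the route outlined in the introduction. We approximate \eqref{eq:KPZ} by open ASEP on $N$ sites under weakly asymmetric scaling, and identify the limit of its stationary height profiles with the law of $B_1$ under $\mathbb P_{\mathrm{KPZ}}^{u,v}$.

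\textbf{Step 1: discrete setup.} Fix $L>0$ and $u,v\in\mathbb R$. Take open ASEP on $\{1,\dots,N\}$ with asymmetry $q=e^{-2/\sqrt{N'}}$, where $N'=N/L$, and boundary rates tuned so that the boundary parameters $A,C$ satisfy $A\approx q^{u}$ and $C\approx q^{v}$. These are the parameters under which the convergence results of \cite{corwin2018open,parekh2017kpz} apply. We will check that Liggett's condition holds for all large $N$ after a suitable choice of the remaining boundary rates; this is possible for arbitrary real $u,v$, since only the products $A,C$ are constrained.

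Let $\mu_N$ be the unique stationary measure of open ASEP, and let $h^N(x)$ be the rescaled height function, $N^{-1/2}$ times the particle–hole sum up to $\lfloor N' x\rfloor$. Then \cite{corwin2018open,parekh2017kpz} give the following. If $h^N$ started from $\mu_N$ is tight in $C([0,L])$ and every subsequential limit $h_0$ has suitable exponential moments, then the law of $h_0$ is stationary for \eqref{eq:KPZ}. Indeed, stationarity passes to the limit because the height-increment process converges, and $\mu_N$ is invariant. Uniqueness and ergodicity then follow from \cite{parekh2022ergodicity,knizel2022strong}. It therefore suffices to show that $h^N\Rightarrow B_1$ under $\mathbb P_{\mathrm{KPZ}}^{u,v}$.

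\textbf{Step 2: two-layer representation and Bryc polynomials.} By Bryc's result \cite{bryc2024stationary}, under Liggett's condition $\mu_N$ is the first marginal of a two-layer measure. This measure is a pair of $\pm1$-step paths $(\gamma_1,\gamma_2)$, reweighted by a product of a factor $q^{\text{(area-type interaction)}}$ and boundary weights. The normalization involves the Bryc polynomials, which by \cite{SchmidYangShock} equal expectations over inhomogeneous birth–death chains. We will rewrite the Radon–Nikodym density of the two-layer measure with respect to a pair of independent simple random walks, with drifts chosen to match $\mathbb P_{\mathrm{BM}}^{-v,v}$. This gives an expression of the form $\mathcal Z_N^{-1}F_N(\gamma_1-\gamma_2)$.

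The key estimate concerns the interaction. Under diffusive scaling, $F_N$ should converge to
\[
\Big(\int_0^L e^{-(B_1-B_2)}\,ds\Big)^{-u-v}
\]
uniformly on compacts of path space. Formally this comes from $\sum_k q^{\gamma_1(k)-\gamma_2(k)}\approx N'^{1/2}\int e^{-(B_1-B_2)}$ raised to the power $-u-v$, which enters through the boundary exponents of the Bryc polynomial. To control the polynomials we will apply optional stopping to exponential martingales of the birth–death chain. This yields two-sided bounds of the form $F_N\le C\exp(c\max|\gamma_1-\gamma_2|/\sqrt N)$, uniformly in $N$.

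\textbf{Step 3: limit.} By Donsker's invariance principle, the rescaled reference walks converge to $\mathbb P_{\mathrm{BM}}^{-v,v}$. Combined with the uniform convergence of $F_N$ on compacts and the uniform exponential bounds, which give uniform integrability via Gaussian tails of the walk maxima, we obtain two conclusions. First, $\mathcal Z_N\to e^{-v^2L}\mathcal Z^{u,v}_{\mathrm{KPZ}}$. Second, the reweighted pair converges to $\mathbb P^{u,v}_{\mathrm{KPZ}}$ in the sense of \eqref{eq:PstatKPZbis}. Taking the first marginal gives $h^N\Rightarrow B_1$. The same bounds give tightness and the exponential moments required in Step 1.

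\textbf{Main obstacle.} We expect the hardest step to be the uniform asymptotics of the Bryc polynomials when $u+v<0$. In that regime the exponent $-u-v$ is positive and the weight favors large $\int e^{-(B_1-B_2)}$, so upper bounds need real care to prevent mass escaping. Our first attempt will be a supermartingale built from $q^{\pm(\gamma_1-\gamma_2)}$ times a compensator, stopped at exit times of level sets. This should show that excursions where $\gamma_2$ lies far above $\gamma_1$ have Gaussian-small probability, which beats the polynomial-in-$\int e^{-(B_1-B_2)}$ growth of the weight.
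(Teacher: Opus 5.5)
Your overall route matches the paper's. Both use open ASEP under Liggett's condition, Bryc's two-layer representation, the birth--death characterization of the Bryc polynomials, Donsker plus uniform integrability, and the cited ASEP-to-KPZ and uniqueness results. The gap is that the one step carrying the whole argument, the weakly asymmetric limit of the Bryc polynomials, is only asserted "formally", and the tools you name would not prove it.

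Your description of the representation is also off. The Bryc polynomial is not part of the normalization, and there is no area-type $q$-interaction. Relative to two independent simple walks, the law of $\bg=(S_1-S_2)/2$ has weight $\b^{\gamma_N}(\a\b)^{-\min(\bg)}W_{\bs(\bg)}(\a\b)$, and $W_{\bs(\bg)}$ depends on the path through its level-set counts $\bs(\bg)$. So what must be shown is that $W_{\bs}(q^{\lambda})=\mathbb E[q^{\lambda X(N+1)}]$, with $\lambda=u+v$, behaves like $\big((1-q)\sum_\ell\sigma_\ell q^\ell\big)^{-\lambda}$ uniformly over all compositions.

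Optional stopping at exit times does not give this. The missing idea is the transform $Y_i=[i]_q q^{-X(i)}$.
\begin{itemize}
\item It satisfies the exact affine recursion $\mathbb E[Y_{i+1}\mid X(i)]=a_iY_i+1$, with $a_i\in\{1,q\}$ determined by $\mathsf S^{\bs}$.
\item Hence $\mathbb E[Y_{N+1}]=\sum_\ell\sigma_\ell q^\ell=q^{-\min(\bg)}\sum_j q^{\gamma_j}$ exactly.
\item $Y_{N+1}/\mathbb E[Y_{N+1}]\to1$ in every $L^p$, uniformly in $\bs$. This uses the conditional increment bounds $\mathbb E[|\Delta_{i+1}|^p\mid\mathcal F_i]\lesssim(1-q)^{p-1}(Y_i+1)^{p-1}$, the summability $\sum_i(1-q)/m_{i+1}\le C_L$ (from $m_k\ge[k]_q$), and Rosenthal's inequality.
\item Since $q^{\lambda X(N+1)}=([N+1]_q/Y_{N+1})^{\lambda}$, the asymptotics follow. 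The deterministic bound $Y_{N+1}\ge[N+1]_q$ handles $\lambda>0$.
\end{itemize}

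Your "main obstacle" paragraph aims at the wrong object. Under the reference measure, Gaussian tails of $\max_j|S_1(j)-S_2(j)|/\sqrt n$ follow directly from Doob's inequality. They already beat any weight bounded by $Ce^{cT_n}$, so no supermartingale in $q^{\pm(\gamma_1-\gamma_2)}$ is needed. The genuine difficulty when $u+v<0$ is the chain itself: you need $\mathbb E[q^{-|\lambda|X(N+1)}]\lesssim(\mathbb E[q^{-X(N+1)}])^{|\lambda|}$ uniformly over all compositions. The paper gets this from the recursive moment inequality $\mathbb E[V_{i+1}^p]\le\big(1+A_p(1-q)/m_{i+1}\big)\mathbb E[V_i^p]$. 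Excursion estimates for the paths say nothing about it.

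Even if achieved, your exponential upper bound on $F_N$ gives only uniform integrability, not the pointwise limit. That limit is what identifies $\mathbb P^{u,v}_{\mathrm{KPZ}}$.

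A smaller point concerns the ASEP-to-KPZ input. What is needed is uniform-in-$n$ moment and H\"older bounds on the discrete initial data under $\mu_N$, not exponential moments of subsequential limits. The paper takes this from Corwin--Knizel and Himwich.
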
 

Theorem \ref{thm:main open KPZ stat} was rigorously established in \cite{DGR:openKPZ} for full parameters by stochastic analytic methods, resolving the conjecture from \cite{barraquand2021steady}.
Our aim is to give a short probabilistic proof of Theorem~\ref{thm:main open KPZ stat}. To this end, we analyze the stationary measure
of the open ASEP under weakly asymmetric scaling.

\subsection{The open ASEP}

Solving the (open) KPZ equation is closely related to studying the asymmetric simple exclusion process; see \cite{bertini1997stochastic} for a first construction of solutions to the full space KPZ equation, and \cite{corwin2018open,parekh2017kpz} for open KPZ equation solutions arising from the open ASEP. Let us now formally describe the open ASEP and the relevant weakly asymmetric scaling.

The open ASEP is a continuous-time Markov chain on $\{0,1\}^{N}$  with parameters
\begin{equation}\label{eq:conditions open ASEP}
\alpha,\beta>0,\quad \gamma,\delta\geq 0,\quad\text{and}\quad  0\leq q<1 . 
\end{equation}
The state of the system is represented by a configuration $(\tau_1,\tau_2,\dots,\tau_N) \in \{0,1\}^N$ where $\tau_i$ indicates whether the $i^{\text{th}}$ site is occupied or empty. In the bulk of the system, each particle attempts to jump to the left according to rate $q$ Poisson clocks, and to the right at rate $1$ Poisson clocks, independently of all other particles. An attempt fails when
the target site is already occupied.  At the two boundaries, particles can enter and exit the segment. More precisely, at the left boundary, particles attempt to enter the system from the
reservoir at random with rate $\alpha$ and attempt to exit at random with rate $\gamma$. Similarly, at the right boundary, particles
attempt to enter at random with rate $\delta$ and attempt to exit at rate $\beta$; see Figure~\ref{fig:openASEP} for an illustration when the rates are allowed to depend on the system size. The assumption $q<1$ ensures that  particles have a  higher rate of moving to the right.  

Next, we define the boundary parameters
\begin{equation}\label{eq:defining ABCD}
\a=\phi_+(\alpha,\gamma),\quad\b=\phi_+(\beta,\delta),\quad \mathsf{c}=\phi_-(\alpha,\gamma), \quad \mathsf{d}=\phi_-(\beta,\delta),
\end{equation}
where we have
\begin{equation} \label{eq:phi}
\phi_{\pm}(x,y):=\frac{1}{2x}\left(1-q-x+y\pm\sqrt{(1-q-x+y)^2+4xy}\right), \quad \mbox{for }\; x>0\mbox{ and }y\geq 0.
\end{equation}
In particular, note that 
\begin{equation}\label{eq:conditions qABCD}
\a,\b\geq0,\quad -1<\mathsf{c},\mathsf{d}\leq 0,\quad 0\leq q<1.
\end{equation}
Intuitively, $(1+\a)^{-1}$, respectively $\b(1+\b)^{-1}$, can be interpreted as the effective density at the left side, respectively right side, of the segment.
Moreover, for the parameters $\gamma$ and $\delta$, we let
\begin{equation} \label{eq:Liggett's condition}
\gamma=q(1-\alpha),\quad\delta=q(1-\beta),\quad \alpha,\beta\in(0,1].
\end{equation}
which is known as Liggett's condition; see \cite{liggett1975ergodic}. In this case, the above parameters simplify to 
\begin{equation}\label{eq:parameters under Liggett condition}
\a=\frac{1-\alpha}{\alpha},\quad\b=\frac{1-\beta}{\beta},\quad \mathsf{c}=-q,\quad\mathsf{d}=-q , 
\end{equation} and effective densities are given by $\alpha$ and $1-\beta$, respectively. 
We will rely on Liggett's condition in Section~\ref{sec:BrycPoly} when we characterize the stationary distribution of the open ASEP using Bryc polynomials. 

\begin{figure}
\centering
\begin{tikzpicture}[scale=0.85]
\def\spiral[#1](#2)(#3:#4:#5){
\pgfmathsetmacro{\domain}{pi*#3/180+#4*2*pi}
\draw [#1,
       shift={(#2)},
       domain=0:\domain,
       variable=\t,
       smooth,
       samples=int(\domain/0.08)] plot ({\t r}: {#5*\t/\domain})
}
\def\particles(#1)(#2){
  \draw[black,thick](-3.9+#1,0.55-0.075+#2) -- (-4.9+#1,0.55-0.075+#2) -- (-4.9+#1,-0.4-0.075+#2) -- (-3.9+#1,-0.4-0.075+#2) -- (-3.9+#1,0.55-0.075+#2);
\node[shape=circle,scale=0.6,fill=red] (Y1) at (-4.15+#1,0.2-0.075+#2) {};
  	\node[shape=circle,scale=0.6,fill=red] (Y2) at (-4.6+#1,0.35-0.075+#2) {};
  	\node[shape=circle,scale=0.6,fill=red] (Y3) at (-4.2+#1,-0.2-0.075+#2) {};
   	\node[shape=circle,scale=0.6,fill=red] (Y4) at (-4.45+#1,0.05-0.075+#2) {};
  	\node[shape=circle,scale=0.6,fill=red] (Y5) at (-4.65+#1,-0.15-0.075+#2) {}; }
\def\annhil(#1)(#2){	  \spiral[black,thick](9.0+#1,0.09+#2)(0:3:0.42);
\draw[black,thick](8.5+#1,0.55+#2) -- (9.5+#1,0.55+#2) -- (9.5+#1,-0.4+#2) -- (8.5+#1,-0.4+#2) -- (8.5+#1,0.55+#2); }
\node[shape=circle,scale=1.5,draw] (B) at (2.3,0){} ;
	\node[shape=circle,scale=1.5,draw] (C) at (4.6,0) {};
	\node[shape=circle,scale=1.2,fill=red] (CB) at (2.3,0) {};
    \node[shape=circle,scale=1.5,draw] (A) at (0,0){} ;
 	\node[shape=circle,scale=1.5,draw] (D) at (6.9,0){} ; 
 	 	\node[shape=circle,scale=1.5,draw] (Z) at (-2.3,0){} ;
	\node[shape=circle,scale=1.2,fill=red] (YZ) at (-2.3,0) {};
   \node[line width=0pt,shape=circle,scale=1.6] (B2) at (2.3,0){};
	\node[line width=0pt,shape=circle,scale=2.5] (D2) at (6.9,0){};
		\node[line width=0pt,shape=circle,scale=2.5] (Z2) at (-2.3,0){};
		
	\node[line width=0pt,shape=circle,scale=2.5] (X10) at (6.8,0){};
		\node[line width=0pt,shape=circle,scale=2.5] (X11) at (-2.2,0){};	
			\node[line width=0pt,shape=circle,scale=2.5] (X12) at (8.4,0){};
		\node[line width=0pt,shape=circle,scale=2.5] (X13) at (-3.8,0){};		
\draw[thick] (Z) to (A);	
	\draw[thick] (A) to (B);	
		\draw[thick] (B) to (C);	
  \draw[thick] (C) to (D);

\particles(0)(0);
\particles(6.9+4.9+1.6)(0);


\draw [->,line width=1pt]  (B2) to [bend right,in=135,out=45] (C);
 \draw [->,line width=1pt] (6.9+1.6,-0.475) to [bend right,in=135,out=45] (D);	
   \draw [->,line width=1pt] (B2) to [bend right,in=-135,out=-45] (A);
     \draw [->,line width=1pt] (Z) to [bend right,in=135,out=45] (-3.9,-0.475);
    \node (text1) at (3.5,1){$1$} ;    
	\node (text2) at (1.1,1){$q_n$} ;   
	\node (text3) at (-2.3-1,1){$\alpha_n$}; 
	\node (text4) at (6.9+1,1){$\beta_n$};   
    \node (text3) at (-2.3-1,-1){$\gamma_n$}; 
	\node (text4) at (6.9+1,-1){$\delta_n$};   

    \node[scale=0.9] (text1) at (-2.2,-0.7){$1$} ;    
    \node[scale=0.9] (text1) at (6.8,-0.7){$n$} ;   
  	
  \draw [->,line width=1pt] (-3.9,0.475) to [bend right,in=135,out=45] (Z);
   \draw [->,line width=1pt] (D) to [bend right,in=135,out=45] (6.9+1.6,0.475);

	\end{tikzpicture}	
\caption{\label{fig:openASEP} Visualization of the open ASEP under weakly asymmetric scaling for parameters $(q_n,\alpha_n,\beta_n,\gamma_n,\delta_n)$ and $L=1$ on a segment of length $n$.}
\end{figure}
Note that for all $\alpha,\beta>0$, the open ASEP on a segment of length $n$ has a unique stationary distribution $\mu^{(q,\a,\b)}_n$ for all $n\in \mathbb N$. For fixed $L>0$ and for $n\in \mathbb N$, we consider the open ASEP on a segment $\llbracket 1,\lfloor L n \rfloor \rrbracket$, where we write $\llbracket a,b\rrbracket := \mathbb Z \cap [a,b]$ for all $a,b \in \mathbb Z$ with $a < b$. The following is our main result on the stationary distribution of the open ASEP under weakly asymmetric scaling and Liggett's condition. 

\begin{theorem}\label{thm:open KPZ equation scaling}
Fix \(L>0\) and \(u,v\in\mathbb R\). For \(n\in\mathbb N\), let
\begin{equation}\label{eq:WASEP limit}
q_n=e^{-2/\sqrt n},\qquad
\alpha_n=\frac{1}{1+q_n^u},
\qquad
\beta_n=\frac{1}{1+q_n^v},
\qquad
\gamma_n=q_n(1-\alpha_n),
\qquad
\delta_n=q_n(1-\beta_n).
\end{equation}
For all sufficiently large \(n\), so that \(\lfloor nL\rfloor\ge1\), consider the open ASEP on the lattice $\llbracket1,\lfloor nL\rfloor\rrbracket$ with jump rates given by $\alpha_n,\beta_n,\gamma_n,\delta_n,q_n$. Let $\tau=(\tau_1,\ldots,\tau_{\lfloor nL\rfloor})\in\{0,1\}^{\lfloor nL\rfloor}$ be distributed according to the stationary measure of this model, and let 
\begin{equation}\label{eq:jtynd}
h_n^\tau(x)=\frac{1}{\sqrt n}\sum_{i=1}^{\lfloor nx\rfloor}(2\tau_i-1),\quad\mbox{ for } x\in[0,L].
\end{equation}
We view \(h_n^\tau\) as a random element of \(D([0,L],\mathbb R)\) equipped with the Skorokhod \(J_1\) topology.  Then, as \(n\to\infty\),
$h_n^\tau\Rightarrow B_1$ weakly in \(D([0,L],\mathbb R)\), where \(B_1\) is the first coordinate under \(\mathbb P_{\mathrm{KPZ}}^{u,v}\) from Definition \ref{def:KPZstat}.
\end{theorem}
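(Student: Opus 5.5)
We will follow the route announced in the introduction. We start from Bryc's two-layer Gibbs representation of $\mu_n^{(q,\a,\b)}$ under Liggett's condition \cite{bryc2024stationary}. It writes the law of $\tau$ as the first marginal of a measure on pairs of lattice paths $(\tau,\tau')\in\{0,1\}^{N}\times\{0,1\}^{N}$, $N=\lfloor nL\rfloor$. This measure is a product of i.i.d.\ Bernoulli-type weights, reweighted by a functional built from Bryc polynomials evaluated along the height difference of the two layers. The weight takes the schematic form
\[
\frac{1}{Z_n}\,\a^{-H_N}\,q^{\sum_i(\text{local area terms})}\,\mathsf{P}_n\bigl(H_N\bigr),
\]
where $H$ is the difference of the two height functions. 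Via \cite{SchmidYangShock}, the Bryc polynomial $\mathsf{P}_n$ is expressed as an expectation over an inhomogeneous birth--death Markov chain. We substitute the scaling \eqref{eq:WASEP limit}, so that $\a_n=q_n^{u}$, $\b_n=q_n^{v}$ and $\mathsf{c}=\mathsf{d}=-q_n$ by \eqref{eq:parameters under Liggett condition}. Under the reference product measure, the two rescaled height functions $(h_n^\tau,h_n^{\tau'})$ then converge by Donsker to $\mathbb P_{\mathrm{BM}}^{0,0}$, possibly with drifts $(-v,v)$ after an exponential tilt, to be matched with \eqref{eq:PstatKPZbis}.

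The second step is the asymptotics of the reweighting functional. The $q$-power area terms should become
\[
q_n^{\sum_k e^{\dots}}\approx\exp\Bigl(-\sqrt n\cdot\tfrac{1}{n}\sum_k e^{-(\text{rescaled difference})}\Bigr)\longrightarrow\Bigl(\int_0^L e^{-(B_1-B_2)}\,ds\Bigr)^{\text{power}}.
\]
More precisely, the Bryc polynomial, read as a birth--death chain expectation, should converge to an expression of the form
\[
\Gamma(u+v)^{-1}\int_0^\infty t^{u+v-1}e^{-t\int_0^L e^{-(B_1-B_2)}}\,dt,
\]
which produces the factor $(\int_0^L e^{-(B_1-B_2)})^{-u-v}$ when $u+v>0$. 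In the shock region $u+v\le 0$, the chain instead yields a polynomial moment structure giving the same negative/positive power. We will prove these limits uniformly over paths with $\sup|H|\le M\sqrt n$ and Hölder-controlled increments. The tool is martingale estimates for the birth--death chain: we compute its generator, exhibit an exponential martingale, and use Doob's inequality to control deviations from its deterministic approximation.

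The third step upgrades this to weak convergence. We need uniform bounds on the Radon--Nikodym densities, namely a bound on $\sup_n \mathbb E[(dP_n/dP_{\mathrm{ref}})^{1+\epsilon}]$. Combined with tightness of Donsker paths and continuity of $(B_1,B_2)\mapsto\int_0^L e^{-(B_1-B_2)}$ in the uniform topology, this gives convergence of the tilted measures. The normalizations then satisfy $Z_n\to\mathcal Z^{u,v}_{\mathrm{KPZ}}$, and the first marginal converges in $D([0,L])$ (the $J_1$ topology is fine since the limit is continuous). We expect the main obstacle to be these uniform integrability and uniform polynomial asymptotics. This is especially delicate when $u+v<0$, where the weight $(\int e^{-(B_1-B_2)})^{-u-v}$ grows like $e^{|u+v|\max(B_2-B_1)}$. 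There we will first try a Gaussian tail bound on $\max(B_2-B_1)$ for the discrete walks, together with an upper bound on the birth--death expectation of the form $C\exp(c\max H/\sqrt n)$, obtained from a supermartingale comparison.
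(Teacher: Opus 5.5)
Your overall architecture matches the paper's: the two-layer representation, then Bryc-polynomial asymptotics via the birth--death chain, then Donsker plus uniform integrability. However, the central step is not supplied, and the heuristics you give for it point the wrong way.

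In the representation actually available (Theorem~\ref{thm:characterization stationary measure ASEP} with \eqref{eq:PNab}), you write $\bg=(S_1-S_2)/2$ for two independent simple random walks. This absorbs $2^{H(\bg)}$, and the remaining weight is $\b^{\gamma_N}(\a\b)^{-\min(\bg)}W_{\bs(\bg)}(\a\b)$ with $\a\b=q^{u+v}$. Three features of this weight matter:
\begin{itemize}
\item There are no explicit $q$-area factors.
\item $W_{\bs(\bg)}$ depends on the whole local-time profile $\bs(\bg)$, not on the endpoint $H_N$.
\item $\b^{\gamma_N}=e^{-v(S_1(N)-S_2(N))/\sqrt n}$ supplies the endpoint tilt, while $(\a\b)^{-\min(\bg)}$ is needed to cancel a shift hidden inside $W$.
\end{itemize}
Your heuristic $q_n^{\sum_k e^{\cdots}}\approx\exp(-\sqrt n\cdot O(1))$ sends the weight to $0$; it does not produce a power of $\int_0^L e^{-(B_1-B_2)}$. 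The Gamma-integral formula is only an identity for the target, not a mechanism by which the chain reaches it, and for $u+v\le 0$ you give nothing concrete. What must be shown, with $\lambda=u+v$, is
\[
(2\sqrt n)^{\lambda}q^{-\lambda\min(\bg)}W_{\bs(\bg)}(q^{\lambda})\approx\Bigl(\tfrac1n\textstyle\sum_{j=0}^{N} q^{\gamma_j}\Bigr)^{-\lambda}
\]
uniformly, together with a domination of the correct order $n^{-\lambda/2}$. ``Generator, exponential martingale, Doob'' does not say how the composition enters or where $\sum_j q^{\gamma_j}$ comes from.

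The missing idea is the transform $Y_i=[i]_q q^{-X(i)}$. By \eqref{eq:etget1}--\eqref{eq:etget2}, one has $\mathbb E[Y_{i+1}\mid X(i)]=a_iY_i+1$, where $a_i=q$ at the $r$ times with $\mathsf S^{\bs}(i)=1$ and $a_i=1$ otherwise. Hence $\mathbb E[Y_{N+1}]=\sum_\ell\sigma_\ell q^\ell=q^{-\min(\bg)}\sum_j q^{\gamma_j}$ exactly. This identity is where the discrete area and the cancellation of $(\a\b)^{-\min(\bg)}$ come from. Conditional moment bounds on the two-point increments, combined with Rosenthal's inequality, then give
\[
\mathbb E\bigl|Y_{N+1}/\mathbb E Y_{N+1}-1\bigr|^p\le C(1-q)^{p/2}
\]
uniformly over all compositions. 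Since $W_{\bs}(q^\lambda)=[N+1]_q^{\lambda}\,\mathbb E[Y_{N+1}^{-\lambda}]$, and negative powers are controlled by the deterministic bound $Y_{N+1}\ge[N+1]_q$, this yields $((1-q)M_{\bs}(q))^{\lambda}W_{\bs}(q^{\lambda})\to1$ uniformly in $\bs$ for every real $\lambda$ at once. No fan/shock split is needed.

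The same uniform statement is what gives the domination of the density by $Ce^{(|u|+2|v|)T_n}$, with $T_n=\sup_x|S_1(\lfloor nx\rfloor)-S_2(\lfloor nx\rfloor)|/\sqrt n$, which drives the uniform integrability. A generic supermartingale bound of the form $C\exp(c\max H/\sqrt n)$ would not capture the $n^{-(u+v)/2}$ normalization. So without this concentration estimate your Step~3 has no footing either.
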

 
The main goal of this note is to provide a short proof of Theorem~\ref{thm:open KPZ equation scaling} which in return gives Theorem~\ref{thm:main open KPZ stat}. 

\begin{proof}[Proof of Theorem \ref{thm:main open KPZ stat} assuming Theorem \ref{thm:open KPZ equation scaling}]
    Under the weakly asymmetric limit of parameters \eqref{eq:WASEP limit},  it is shown in \cite[Theorem 1.2 (1)]{corwin2021stationary} for $L=1$ and \cite[Proposition~2.13, Theorem~2.14]{himwich2024stationary} for general $L>0$ that, as $n\to\infty$, the (rescaled) height function processes \eqref{eq:jtynd} for the stationary measures of open ASEP are tight and converge to the stationary measure of the open KPZ equation. We mention that this convergence of stationary measures follows from the process level convergence from the (rescaled) open ASEP to the open KPZ equation obtained in \cite{corwin2018open} for $u,v\geq \frac{1}{2}$ and \cite{parekh2017kpz} for general $u,v\in\mathbb R$. 
    With this input, the description of the stationary measure of the open KPZ equation stated in Theorem \ref{thm:main open KPZ stat} follows directly from  Theorem \ref{thm:open KPZ equation scaling} as a scaling limit of the open ASEP.
    Finally, let us stress that the ergodicity of the open KPZ equation and uniqueness of its stationary measure follow directly from \cite{knizel2022strong, parekh2022ergodicity} for $L=1$, while the results in \cite{parekh2022ergodicity} directly extend to general $L>0$; see also the comment after \cite[Theorem 1.4]{himwich2024stationary}.  
\end{proof}

\subsection{Related works}

The open KPZ equation can be studied from various perspectives; see \cite{corwin2022some} for a survey. The stationary measures of the open KPZ equation were first characterized by Corwin and Knizel in \cite{corwin2021stationary} when $u+v \geq 0$ and $L=1$  through its multipoint Laplace transform. Their analysis uses weakly asymmetric limits of the Askey--Wilson process formulas for the stationary measure of the open ASEP. Barraquand and Le Doussal \cite{barraquand2021steady} and Bryc, Kuznetsov, Wang and
Weso{\l}owski \cite{bryc2021markov} obtained probabilistic representations of the stationary measures by two-layer Brownian motions through inverting these Laplace transforms. The former conjectured the Brownian representation for arbitrary interval lengths and all real boundary parameters, while the latter rigorously identified the stationary measure when $u+v\geq0$, $\min(u,v)>-1$, and $L=1$. In the fan region $u+v\geq0$, a direct derivation of the Brownian representation (without going through the Laplace transforms) was given by Barraquand and Le Doussal \cite{barraquand2023stationary} using the Enaud--Derrida representation of the matrix product ansatz for the open ASEP. The required weak convergence was subsequently established rigorously by Himwich \cite{himwich2024stationary}. 

Barraquand, Corwin and the second author of this paper \cite{barraquand2024stationary} developed a two-layer Gibbs framework for stationary measures of integrable models with two-sided open boundaries. Through an intermediate-disorder limit of the log-gamma polymer stationary measure on a strip, they obtained the Brownian formula for all boundary parameters, conditional on the corresponding (conjectured) convergence of the log-gamma polymer model on a strip to the open KPZ equation. This convergence has been established rigorously in the full space \cite{alberts2014intermediate} and the half-space \cite{wu2018intermediate,parekh2019positive,barraquand2022stationary}; the convergence in the two-boundary case was conjectured but not shown. Recently, Dunlap, Gu and Rosati \cite{DGR:openKPZ} established the Brownian
formula for all interval lengths and boundary parameters using stochastic analytic methods. Very recently, Himwich and Wang \cite{himwich2026laplace} extended the Laplace-transform description to a range of parameters in the shock region through analytic continuation. Uniqueness and ergodicity of the stationary measure of the open KPZ equation was addressed in \cite{knizel2022strong,parekh2022ergodicity}, building on earlier ideas from \cite{hairer2018strong}. In particular, it was shown that the open KPZ equation exhibits a unique stationary measure.     
Let us also mention recent progress on the growth rate \cite{barraquand2024integral}, cumulants \cite{Barraquand_2025}, and fluctuations \cite{hip2025fluctuation} of the open KPZ equation.
In a similar direction, stationary measures for the half-space KPZ equation, i.e.~the KPZ equation in the presence of one open boundary, were established in \cite{barraquand2022stationary}, and the limits from the stationary measures of the open KPZ equation to that of the half-space KPZ equation was studied in \cite{bryc2021markov2}.

As we have mentioned in the preface, the open KPZ equation can be approximated by the discrete model of open ASEP.
The structure of stationary measures for the open ASEP has been a topic of main interest over the past decades; see \cite{liggett1975ergodic,derrida1993exact,Uchiyama_2004,Bryc2017,wang2023askey} for selection of works on its structure, in particular on the matrix product ansatz and its connections to the Askey--Wilson orthogonal polynomials. The weakly asymmetric scaling, i.e, where the bias parameter $q$ tends to $1$, takes the ASEP to the KPZ equation,   see Bertini and Giacomin \cite{bertini1997stochastic} for the full-space setting. Corwin and Shen in \cite{corwin2018open}, as well as Parekh in \cite{parekh2017kpz} for general boundary parameters, show that the suitably weakly rescaled open ASEP converges to the open KPZ equation. As in several previous works, we study the open KPZ stationary measure by taking the weakly asymmetric scaling limit from the stationary measure of open ASEP.

The starting point for our arguments is the two-layer representation of the open ASEP. The two-layer Gibbs measures for the stationary measures of integrable models with two-sided boundaries were introduced in \cite{barraquand2024stationary}, later adapted in \cite{bryc2024two} to the open TASEP, and in \cite{bryc2024stationary} for the open ASEP with Liggett's condition. The latter work introduces the family of polynomial correction terms, which we call Bryc polynomials, indexed by integers compositions. Our previous work \cite{SchmidYangShock} characterized these polynomials from a probabilistic perspective using  inhomogeneous birth-death chains; see also \cite{SchmidYangCoexistence} for further applications of this probabilistic characterization on the coexistence line of the open ASEP, and \cite{SchmidYangFan} for complementary results for the open ASEP in the fan region.

\subsection{Strategy for the proof}

Our arguments rely on the two-layer representation of the open ASEP from \cite{bryc2024stationary} together with a probabilistic representation of the associated Bryc polynomials from \cite{SchmidYangShock}. Heuristically, the Bryc polynomials $W_{\bs}$, indexed by integer compositions $\bs$, capture the Radon-Nikodym with respect to the stationary distribution of the totally asymmetric simple exclusion process with open boundaries (open TASEP) where $q=\gamma=\delta=0$. We analyze the Bryc polynomials under a weakly asymmetric scaling, i.e, we evaluate the polynomials $W_{\bs}(z)$ around $z=1$, for typical compositions $\bs$. More precisely, we study the characterization from \cite{SchmidYangShock} under weakly asymmetric scaling, i.e., every Bryc polynomial $W_{\bs}$ for a composition of $N$ is expressed by the moment generating function of an associated time-inhomogeneous birth-death chain $X^{\mathsf S(\bs)}$ evaluated after $N$ steps. Instead of studying $X^{\mathsf S(\bs)}=(X^{\mathsf S(\bs)}_i)_{i \in \{1,\dots,N\}}$ directly, we consider its exponential transformation $(q^{-X^{\mathsf S(\bs)}_i})_{i \in \{1,\dots,N\}}$, which takes a simpler form, and for which we control the increments and moments using martingale arguments. As a consequence, we establish that typical Bryc polynomials allow, after a suitable weakly asymmetric rescaling, for a Taylor expansion around $1$, and we determine the leading coefficient explicitly. Taking now a suitable weakly asymmetric scaling limit of the stationary distribution of the open ASEP as in \cite{corwin2018open,parekh2022ergodicity} to obtain a stationary measure of the open KPZ equation, this allows us  to identify the Radon-Nikodym derivative from Theorem~\ref{thm:main open KPZ stat}.

\subsection{Outline of the paper}

In Section~\ref{sec:TwoLayer}, we collect characterizations of the open ASEP stationary distribution, in particular the two-layer representation of the stationary distribution of the open ASEP via Bryc polynomials, and the probabilistic  characterization of Bryc polynomials via inhomogeneous birth-death chains. In Section~\ref{sec:WeaklyLimit}, we study the inhomogeneous birth-death chain from the probabilistic characterization of the Bryc polynomial, and its exponential transformation, and we establish a limit theorem for typical Bryc polynomials under a weakly asymmetric scaling. In Section~\ref{sec:MainProof}, we conclude the proof of Theorem~\ref{thm:open KPZ equation scaling} on the stationary distribution of the open ASEP under a weakly scaling, and hereby also the proof of Theorem~\ref{thm:main open KPZ stat}.

\subsection*{Acknowledgments} 
D.S. was partially supported by the Packard Foundation via Amol Aggarwal's Packard Fellowships for Science and Engineering.
Z.Y. was partially supported by the Miller Institute for Basic Research in Science at the University of California, Berkeley, Ivan Corwin's NSF grants DMS-1811143, DMS-2246576, and the Fernholz Foundation's ``Summer Minerva Fellows'' program. Both authors were supported by Ivan Corwin's Simons Foundation Grant 929852.

\subsection*{AI declaration} 

This project was initiated in fall 2024, and the proofs were written down by the end of July 2026. The authors used AI to assist with editing the manuscript, performing some computations, and checking arguments. All proof ideas in this article were developed by the authors. 
%
\section{Characterization of the stationary measure of the open ASEP}  \label{sec:TwoLayer}

\subsection{Basic notations}
In this section, we review the characterization of the stationary measure of the open ASEP developed in \cite{bryc2024stationary} and our companion paper \cite{SchmidYangShock}. 

A \emph{lazy random walk path} is $\bg=(\gamma_0,\dots,\gamma_N)$ such that $\gamma_i\in\mathbb{Z}$ for all $i\in\llbracket0,N\rrbracket$ and $\gamma_i-\gamma_{i-1}\in\{-1,0,1\}$ for all $i\in\llbracket1,N\rrbracket$. We denote by $\lrw_N$ the space of lazy random walk paths $\bg$ with length $N$ started at $\gamma_0=0$. For each $\bg\in\lrw_N$, we write
\begin{equation*}
    \min(\bg) = \min\left\{\gamma_i \colon i \in \llbracket 0,N \rrbracket\right\} \quad \text{ and } \quad \max(\bg) = \max\left\{\gamma_i \colon i \in \llbracket 0,N \rrbracket\right\} . 
\end{equation*}
A sequence $\bs=(\sigma_0,\dots,\sigma_r)$, where $r\in\mathbb{Z}_{\geq0}$, is called a composition of $N+1$ if $\sigma_i\in\mathbb{N}$ for $i\in\llbracket0,r\rrbracket$ and $\sigma_0+\dots+\sigma_r=N+1$.  The order in which the elements of $\bs$ appear is significant, which distinguishes it from the notion of a partition.  
For a path $\bg\in\lrw_N$, we define the composition associated to $\bg$, denoted by $\bs(\bg)$, to be the composition $\bs(\bg)=(\sigma_0,\dots,\sigma_r)$ of $N+1$ given by $r=\max(\bg)-\min(\bg)\in\mathbb{Z}_{\geq0}$ and
\begin{equation} \label{eq:Signature}
\sigma_i=\#\{j:\gamma_j=\min(\bg)+i\}\in\mathbb{N}\quad\mbox{ for } i\in\llbracket0,r\rrbracket.
\end{equation}
We note that $\bs(\bg)$ is indeed a composition of $N+1$: since $\bg$ is integer valued and has increments $\{-1,0,1\}$, every integer value in $\llbracket\min(\bg), \max(\bg)\rrbracket$ is visited at least once, and hence $\sigma_i\in\mathbb{N}$ for $i\in\llbracket0,r\rrbracket$; since the level sets $\{j:\gamma_j=\min(\bg)+i\}$ in \eqref{eq:Signature} partition the time indices  $\llbracket0,N\rrbracket$, we have $\sigma_0+\dots+\sigma_r=N+1$.

\subsection{The Bryc polynomial} \label{sec:BrycPoly}

For $q\in[0,1)$ and every composition $\bs=(\sigma_0,\dots,\sigma_r)$ of $N+1$, where $N\in\mathbb{Z}_{\geq0}$, we consider the Bryc polynomial $z \mapsto W_{\bs}(z)$. It was first introduced in \cite{bryc2024stationary} to describe the stationary distribution of the open ASEP. While the construction in \cite{bryc2024stationary} was achieved using Jackson derivative (also called $q$-differentiation operators), we will in the following rely on a probabilistic characterization from \cite{SchmidYangShock} via an inhomogeneous birth-death process.

\begin{definition}\label{def:characterization measure}
Let $q\in[0,1)$, $N\in\mathbb{Z}_{\geq0}$, and $\boldsymbol{\mathsf{S}}=(\mathsf{S}(0),\dots,\mathsf{S}(N))\in\{0,1\}^{N+1}$ be a $0$-$1$ sequence. We define a time-inhomogeneous birth-death Markov chain $X^{\boldsymbol{\mathsf{S}}}=\{X^{\boldsymbol{\mathsf{S}}}(i)\}_{i=0}^{N+1}$ indexed by this $0$-$1$ sequence $\boldsymbol{\mathsf{S}}$ through $X^{\boldsymbol{\mathsf{S}}}(0)=X^{\boldsymbol{\mathsf{S}}}(1)=0$ and that for each $i\in\llbracket1,N\rrbracket$ and $j\in\llbracket0,i-1\rrbracket$,
\begin{enumerate}
    \item [(\romannum{1})] if $\mathsf{S}(i)=1$, we let $X^{\boldsymbol{\mathsf{S}}}(i+1)-X^{\boldsymbol{\mathsf{S}}}(i)\in\{-1,0\}$ with 
    \begin{equation}\label{eq:etget1}
    \mathbb{P}\left(X^{\boldsymbol{\mathsf{S}}}(i+1)=j-1 \Big{|}X^{\boldsymbol{\mathsf{S}}}(i)=j\right)=\frac{1-q^{j}}{1-q^{i+1}},
    \end{equation}
    \item [(\romannum{2})] if $\mathsf{S}(i)=0$, we let $X^{\boldsymbol{\mathsf{S}}}(i+1)-X^{\boldsymbol{\mathsf{S}}}(i)\in\{0,1\}$ with 
    \begin{equation}\label{eq:etget2}
    \mathbb{P}\left(X^{\boldsymbol{\mathsf{S}}}(i+1)=j+1 \Big{|}X^{\boldsymbol{\mathsf{S}}}(i)=j\right)=\frac{q^{j+1}-q^{i+1}}{1-q^{i+1}}.
    \end{equation}
\end{enumerate} 
We mention that the right sides of \eqref{eq:etget1} and \eqref{eq:etget2} lie in $[0,1]$ since $j\in\llbracket0,i-1\rrbracket$ and $q\in[0,1)$. 
Clearly, by an induction argument, we have $X^{\boldsymbol{\mathsf{S}}}(i)\in\llbracket0,i-1\rrbracket$ for all $i\in\llbracket1,N+1\rrbracket$. When $q=0$, we use the convention $0^0=1$.
\end{definition}

\begin{definition}\label{def:associated 01 seq}
    Let $N\in\mathbb{Z}_{\geq0}$ and $\bs=(\sigma_0,\dots,\sigma_r)$, $r\in\mathbb{Z}_{\geq0}$ be a composition of $N+1$. 
We define the $0$-$1$ sequence associated to composition $\bs$ to be $\boldsymbol{\mathsf{S}}^{\bs}=(\mathsf{S}^{\bs}(0),\dots,\mathsf{S}^{\bs}(N))\in\{0,1\}^{N+1}$ such that,  for  $i\in\llbracket0,N\rrbracket$,
\begin{equation}\label{eq:01seq}
  \mathsf{S}^{\bs}(i)=\begin{cases}
        1 &\quad\mbox{ if }i\in\{\sigma_r,\sigma_r+\sigma_{r-1},\dots,\sigma_r+\dots+\sigma_1\},\\  
        0 &\quad\mbox{ otherwise}.
    \end{cases} 
\end{equation}
\end{definition}

\begin{definition}\label{thm:prob interpretation of Bryc polynomials}
Let $q\in[0,1)$, $N\in\mathbb{Z}_{\geq0}$ and $\bs=(\sigma_0,\dots,\sigma_r)$, $r\in\mathbb{Z}_{\geq0}$ be a composition of $N+1$. We define
    \begin{equation}\label{eq:rep of Bryc polynomial}
    W_{\bs}(z):=\mathbb{E}\left[ z^{X^{\boldsymbol{\mathsf{S}}^{\bs}}(N+1)}\right]. 
    \end{equation}
to be the Bryc polynomial associated to $\bs$.  Here, 
$\boldsymbol{\mathsf{S}}^{\bs}$ is the $0$-$1$ sequence associated to the composition $\bs$ as in Definition \ref{def:associated 01 seq}, and $X^{\boldsymbol{\mathsf{S}}^{\bs}}=\{X^{\boldsymbol{\mathsf{S}^{\bs}}}(i)\}_{i=0}^{N+1}$ is the birth-death Markov chain indexed by this $0$-$1$ sequence $\boldsymbol{\mathsf{S}}^{\bs}$ as in Definition \ref{def:characterization measure}. 
\end{definition}

\subsection{Characterization of the stationary measure of the open ASEP} 
Let $\a,\b\geq0$. Using the Bryc polynomials, one can give a simple characterization of the stationary distribution of the open ASEP. We define the probability measure $\mathbb{P}^{(q,\a,\b)}$ on $\lrw_N$ by 
\begin{equation}\label{eq:PNab}
\mathbb{P}^{(q,\a,\b)}(\bg)=\frac{1}{Z_N^{(q,\a,\b)}}\frac{2^{H(\bg)}\b^{\gamma_N}}{(\a\b)^{\min(\bg)}} W_{\bs(\bg)}(\a\b)\quad\mbox{ for }\bg\in\lrw_N,
\end{equation}
where  
\begin{equation}\label{eq:normalization}
 Z_N^{(q,\a,\b)}=\sum_{\bg\in\lrw_N} \frac{2^{H(\bg)}\b^{\gamma_N}}{(\a\b)^{\min(\bg)}}W_{\bs(\bg)}(\a\b).
\end{equation}
Here, we write $H(\bg)$ to denote the number of horizontal steps in a lazy random walk path $\bg$,
\[
H(\bg)=\#\{i\in\llbracket1,N\rrbracket, \gamma_i=\gamma_{i-1}\} \quad\mbox{ for }\bg\in\lrw_N.
\]
We mention that $W_{\bs(\bg)}(\a\b)$ is the Bryc polynomial indexed by the composition $\bs(\bg)$ of $N+1$ associated to $\bg$ (as in \eqref{eq:Signature}), evaluated at $z=\a\b$. Since $\min(\bg)\leq\min(0,\gamma_N)$, we know that 
$ 2^{H(\bg)}\b^{\gamma_N} (\a\b)^{-\min(\bg)}  W_{\bs(\bg)}(\a\b)$ (and hence $Z_N^{(q,\a,\b)}$ given by \eqref{eq:normalization})
is a polynomial in $\a$ and $\b$ with non-negative coefficients depending on $q$.   When $\a=0$ or $\b=0$, \eqref{eq:PNab} is well-defined as a polynomial in $\a$ and $\b$, again with the convention that $0^0=1$. We have the following characterization of the stationary distribution of the open ASEP. 

\begin{theorem}[c.f. Theorem 1.1 and (3.2), (3.3) in \cite{bryc2024stationary}, Theorem 1.5 in \cite{SchmidYangShock}]\label{thm:characterization stationary measure ASEP}
Consider the open ASEP on the lattice $\llbracket1,N\rrbracket$ with jump rates $\alpha,\beta,\gamma,\delta$ and $q$ satisfying \eqref{eq:conditions open ASEP}.  Assume that Liggett's condition \eqref{eq:Liggett's condition} holds, and recall that $\a=(1-\alpha)/\alpha$ and $\b=(1-\beta)/\beta$.  We denote the stationary measure of this model by $\mu_N^{(q,\a,\b)}$.
Consider the random lazy random walk path $\bg\in\lrw_N$ with law $\mathbb{P}^{(q,\a,\b)}$ defined as in \eqref{eq:PNab}. Suppose $\kappa_i$, $i\in\llbracket1,N\rrbracket$ are i.i.d. Bernoulli random variables with mean $\frac{1}{2}$ that are independent of $\bg$. 
We define random variables $\tau_i\in\{0,1\}$, $i\in\llbracket1,N\rrbracket$ by  
\[
    \tau_i=\begin{cases}
        1 & \mbox{ if } \gamma_i-\gamma_{i-1}=1,\\
        \kappa_i & \mbox{ if } \gamma_i-\gamma_{i-1}=0 ,\\ 
        0 & \mbox{ if } \gamma_i-\gamma_{i-1}=-1.
    \end{cases}
\]
Then $(\tau_1,\dots,\tau_N)\in\{0,1\}^N$ has the law of the stationary measure $\mu_N^{(q,\a,\b)}$ of the open ASEP.
\end{theorem}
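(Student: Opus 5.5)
The statement combines two earlier results, so the plan is to assemble them rather than prove anything from scratch. There are two steps, done in order.

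\textbf{Step 1: Bryc's two-layer representation.} Starting from \cite[Theorem 1.1]{bryc2024stationary}, the stationary measure of the open ASEP under Liggett's condition \eqref{eq:Liggett's condition} is written there as a marginal of a two-layer measure. A configuration $\tau\in\{0,1\}^N$ is paired with an auxiliary second-layer configuration. The weight is a product of three pieces: a boundary factor in $\a=(1-\alpha)/\alpha$ and $\b=(1-\beta)/\beta$, a factor depending on the running minimum, and a correction polynomial. That polynomial depends on the pair only through the composition recording the occupation counts of the levels of the difference path.

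Using \cite[(3.2),(3.3)]{bryc2024stationary}, I would re-encode each pair (layer one, layer two) as a single lazy random walk path $\bg\in\lrw_N$ by taking the difference of the two height functions. The encoding works step by step:
\begin{itemize}
\item an up step of $\bg$ forces $\tau_i=1$;
\item a down step of $\bg$ forces $\tau_i=0$;
\item a horizontal step of $\bg$ corresponds to exactly two local pairs, $(1,1)$ and $(0,0)$, of equal weight.
\end{itemize}
Summing over these two choices produces the factor $2^{H(\bg)}$ in \eqref{eq:PNab}. It also shows that, conditionally on $\bg$, the value $\tau_i$ at each horizontal step is an independent fair coin $\kappa_i$. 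Collecting the boundary weights gives $\b^{\gamma_N}(\a\b)^{-\min(\bg)}$. The level-occupation composition is exactly $\bs(\bg)$ from \eqref{eq:Signature}. This yields \eqref{eq:PNab}, with $W_{\bs}$ still given by Bryc's Jackson-derivative definition.

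\textbf{Step 2: identifying the polynomials.} Here I invoke \cite[Theorem 1.5]{SchmidYangShock}. It states that Bryc's $q$-derivative polynomial indexed by a composition $\bs$ of $N+1$ equals $\mathbb{E}[z^{X^{\boldsymbol{\mathsf S}^{\bs}}(N+1)}]$, where $X^{\boldsymbol{\mathsf S}^{\bs}}$ is the birth--death chain of Definition~\ref{def:characterization measure} driven by the $0$-$1$ sequence of Definition~\ref{def:associated 01 seq}. This matches Definition~\ref{thm:prob interpretation of Bryc polynomials}.

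If one wanted to reprove this identification, the route would be induction on the number of parts $r$. One checks that one step of the chain reproduces the recursion obeyed by Bryc's polynomials under the Jackson derivative $D_q$: a part boundary ($\mathsf S(i)=1$) and an interior step ($\mathsf S(i)=0$) each correspond to a different recursion term. Both sides then agree as polynomials in $z$.

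\textbf{Main obstacle.} The delicate point is the bookkeeping in Step 1. One must verify that Bryc's normalisations (the running-minimum exponent, the reversed order of parts that appears in \eqref{eq:01seq}, and the choice $\mathsf c=\mathsf d=-q$) match the conventions used here exactly, so that no stray power of $\a\b$ or reversal of $\bs$ is lost. I would check this first on small $N$, for instance $N=1,2$, directly against the stationary equations of the open ASEP.
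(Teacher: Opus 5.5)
Your proposal is correct and takes the same route as the paper. The paper does not reprove this statement; it obtains it by combining Bryc's two-layer representation \cite[Theorem 1.1, (3.2), (3.3)]{bryc2024stationary}, rewritten in terms of the difference path $\bg$ (the two equal-weight choices at horizontal steps give $2^{H(\bg)}$ and the fair coins $\kappa_i$), with the birth--death identification of the Bryc polynomials in \cite[Theorem 1.5]{SchmidYangShock}, exactly as you assemble them. The paper uses the same difference-path encoding $\bg=(S_1-S_2)/2$ in Section~\ref{sec:MainProof}, which is consistent with your Step 1 bookkeeping.
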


\section{The weakly asymmetric limit of Bryc polynomials}\label{sec:WeaklyLimit}

In the following, our goal is to derive a first order approximation of the Bryc polynomials under a weakly asymmetric scaling for typical lazy random walk paths. This is summarized in the next proposition.

\begin{proposition}
\label{prop:weak asym Bryc}
Fix \(L>0\) and \(\lambda\in\mathbb R\). Let
\(q_n=e^{-2/\sqrt n}\) for \(n\in\mathbb N\). For
\(\bg\in\lrw_{\lfloor nL\rfloor}\), the Bryc polynomial
\(W_{\bs(\bg)}\) is evaluated with parameter \(q=q_n\). For \(R\ge0\),
denote
\[
\lrw(n,L,R)=
\left\{
\bg\in\lrw_{\lfloor nL\rfloor}:
\left| \gamma_{\lfloor nx\rfloor}\right|
\le Rx\sqrt n\mbox{ for every }x\in[0,L]
\right\}.
\]
Then, for every \(R\ge0\),
\[
\lim_{n\to\infty}\sup_{\bg\in\lrw(n,L,R)}
\left|
\left(2\sqrt n\right)^\lambda
q_n^{-\lambda\min(\bg)}
W_{\bs(\bg)}\left(q_n^\lambda\right)
-
\left(  \frac1n\sum_{j=0}^{\lfloor nL\rfloor} q_n^{\gamma_j}\right)^{-\lambda}
\right|=0 .
\]
Moreover, there exists a constant \(C<\infty\), depending only on \(L\)
and \(\lambda\), such that for every \(R\ge0\), every \(n\in\mathbb N\),
and every \(\bg\in\lrw(n,L,R)\),
\[
\left(2\sqrt n\right)^\lambda
q_n^{-\lambda\min(\bg)}
W_{\bs(\bg)}\left(q_n^\lambda\right)
\le
Ce^{CR}.
\]
\end{proposition}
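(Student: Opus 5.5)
The plan is to turn the Bryc polynomial into a moment of an exponentially transformed birth--death chain and show that this chain concentrates. Write $N=\lfloor nL\rfloor$, $q=q_n$, $X=X^{\boldsymbol{\mathsf S}^{\bs(\bg)}}$, and set $Y_i:=q^{-X(i)}\ge 1$. By Definition~\ref{thm:prob interpretation of Bryc polynomials},
\[
(2\sqrt n)^\lambda q^{-\lambda\min(\bg)}W_{\bs(\bg)}(q^\lambda)=\mathbb E\Bigl[\bigl(q^{\min(\bg)}Y_{N+1}/(2\sqrt n)\bigr)^{-\lambda}\Bigr].
\]
Since $1-q\sim 2/\sqrt n$, the first claim of Proposition~\ref{prop:weak asym Bryc} amounts to two facts. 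First, $q^{\min(\bg)}Y_{N+1}/(2\sqrt n)$ should concentrate around
\[
A_n(\bg):=\frac1n\sum_{j=0}^{N}q^{\gamma_j}=\frac1n\sum_{\ell=0}^r\sigma_\ell\, q^{\min(\bg)+\ell}.
\]
Second, this concentration should be strong enough to pass through the function $y\mapsto y^{-\lambda}$. On $\lrw(n,L,R)$ the quantity $A_n(\bg)$ lies in $[c e^{-2RL},Ce^{2RL}]$, because $|\gamma_j|\le R L\sqrt n$ and $q^{\gamma_j}=e^{-2\gamma_j/\sqrt n}$. Hence the limit is a bounded, continuous function of the target.

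\textbf{Step 1 (affine drift).} From \eqref{eq:etget1} and \eqref{eq:etget2}, the chain $Y$ has affine conditional drift:
\[
\mathbb E[Y_{i+1}-1\mid Y_i]=\frac{q-q^{i+1}}{1-q^{i+1}}(Y_i-1)\quad\text{if }\mathsf S(i)=1,
\qquad
\mathbb E[Y_{i+1}\mid Y_i]=\frac{Y_i-q^{i+1}Y_i+(1-q)}{1-q^{i+1}}\quad\text{if }\mathsf S(i)=0 .
\]
In both cases the increment is at most $(q^{-1}-1)Y_i$. The conditional variance is $O\bigl((1-q)^2Y_i^2\bigr)$ times a jump probability, with a main contribution of order $(1-q)Y_i$. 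Consequently $\mathbb E[Y_{N+1}]$ is given exactly by a linear recursion: a product of factors $q$ at the down-times and an additive source $\approx(1-q)$ at the up-times. Unrolling this recursion with the $0$-$1$ sequence of Definition~\ref{def:associated 01 seq} should give
\[
\mathbb E[Y_{N+1}]=(1+o(1))\,(1-q)\,q^{-\min(\bg)}\sum_j q^{\gamma_j},
\]
uniformly on $\lrw(n,L,R)$. The factors $q^{i+1}$ are negligible for $i\gg\sqrt n$, and the first $O(\sqrt n\log n)$ steps contribute $o(1)$ relatively. The key combinatorial check is that the positions of the $1$'s in $\boldsymbol{\mathsf S}^{\bs}$, namely the partial sums $\sigma_r,\sigma_r+\sigma_{r-1},\dots$, reproduce the weights $q^{\ell}$ on the level occupation times $\sigma_\ell$. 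I would verify this first on small compositions.

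\textbf{Step 2 (martingale concentration).} Let $M_i:=(Y_i-b_i)/a_i$, where $a_i,b_i$ are the deterministic products and sums making $M$ a martingale. Its quadratic variation is bounded by $\sum_i(1-q)a_i^{-2}\mathbb E[Y_i]$. Relative to $\mathbb E[Y_{N+1}]^2\asymp n\,e^{O(R)}$ this is $O(n^{-1/2}e^{CR})$, so $Y_{N+1}/\mathbb E Y_{N+1}\to1$ in $L^2$, uniformly over $\bg\in\lrw(n,L,R)$. For the passage through $y^{-\lambda}$, I would apply Burkholder or Freedman inequalities to the same martingale to get higher-moment and exponential tail bounds. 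The increments are bounded by $(q^{-1}-1)Y_i$ and the relative scale is $n^{-1/4}$, which should give $\mathbb P\bigl(Y_{N+1}<\tfrac12\mathbb EY_{N+1}\bigr)\le e^{-cn^{\epsilon}}$ and $\mathbb E\bigl[(Y_{N+1}/\mathbb EY_{N+1})^{p}\bigr]\le C_p e^{C_pR}$.

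\textbf{Step 3 (conclusion).} For $\lambda<0$, the $p$-th moment bound with $p=|\lambda|$ and uniform integrability yield convergence of $\mathbb E[(\cdot)^{-\lambda}]$. For $\lambda>0$, I would split on the event $\{Y_{N+1}\ge\tfrac12\mathbb EY_{N+1}\}$. On its complement, use $Y\ge1$: then $(q^{\min}Y/2\sqrt n)^{-\lambda}\le(2\sqrt n)^\lambda e^{2\lambda RL}$, which is killed by the stretched-exponential tail. This gives both the uniform convergence and the bound $Ce^{CR}$.

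The main obstacle I expect is the lower-tail estimate in Step 2 for $\lambda>0$. Early in the chain, $Y_i$ is $O(1)$, so the relative fluctuations are not small. I would handle this by starting the martingale comparison at a time $i_0\asymp\sqrt n$, after which $\mathbb EY_i\gtrsim1$ has grown, and by using the up-steps to show that $Y$ escapes from $O(1)$ with overwhelming probability.
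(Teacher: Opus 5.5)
Your plan follows the paper's route. You exponentiate the birth--death chain and use the affine drift to compute the mean of $Y_{N+1}$. You then show that $V=Y_{N+1}/\mathbb E Y_{N+1}$ concentrates at $1$ via a martingale moment inequality, and for $\lambda>0$ you split on $\{V\ge 1/2\}$ using a deterministic lower bound on $Y_{N+1}$. The paper instead normalizes $Y_i=[i]_q\,q^{-X(i)}$. This differs from yours by a deterministic factor, so $V$ is unchanged, but it gives $\mathbb E[Y_{i+1}\mid X(i)]=a_iY_i+1$ with $a_i\in\{q,1\}$. An induction over the block endpoints $\sigma_r,\sigma_r+\sigma_{r-1},\dots$ then gives $\mathbb E Y_{N+1}=\sum_\ell\sigma_\ell q^\ell$ exactly, for every composition (Lemma~\ref{lem:Y chain}). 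This is the combinatorial check you planned, and it makes your asymptotic unrolling unnecessary; no restriction to $\lrw(n,L,R)$ is needed at that stage. Your $\mathsf S(i)=0$ drift should read $\bigl((1-q^{i})Y_i+1-q\bigr)/(1-q^{i+1})$, not with $q^{i+1}Y_i$.

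As written, Step~2 does not close, for two reasons.

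First, the variance estimate. What drives concentration is that jump probability times jump size is at most $1$ (Lemma~\ref{lem:center Y chain}). This gives $\mathbb E[\Delta_{i+1}^2\mid\mathcal F_i]\le 4(1-q)(Y_i+1)$ in the paper's normalization, i.e.\ $O\bigl((1-q)(Y_i+1)/[i+1]_q\bigr)$ for your $Y_i=q^{-X(i)}$. With the order $(1-q)Y_i$ that you state, the accumulated variance is comparable to $n$, the same order as $(\mathbb E Y_{N+1})^2$. So your stated bound gives no concentration, even though the $O(n^{-1/2})$ relative variance you claim is correct.

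Second, the higher moments. For $p>2$, the bracket in Burkholder/Rosenthal is random, and the $\sum_i\mathbb E|\xi_i|^p$ term involves $\mathbb E[(Y_i+1)^{p-1}]$. You therefore first need a priori bounds $\sup_i\mathbb E[(Y_i/\mathbb E Y_i)^p]\le C_{L,p}$, which your plan does not supply. The paper gets them from the one-step recursion $\mathbb E V_{i+1}^p\le(1+A_p(1-q)/m_{i+1})\,\mathbb E V_i^p$ together with $\sum_i(1-q)/m_{i+1}\le C_L$ (Lemmas~\ref{lem:martingale V} and~\ref{lem:maximum V}). A self-bounding application of Rosenthal would also work.

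The same summability shows that the obstacle you anticipate does not arise. Since $m_i\ge[i]_q$ grows deterministically, the early steps contribute only $O\bigl((1-q)\log\frac{1}{1-q}\bigr)$, and no stretched-exponential tail is needed. For $\lambda>0$, your bound from $Y\ge 1$ reads $V^{-\lambda}\le C(1-q)^{-\lambda}$. Combined with $\mathbb P(V<1/2)\le 2^p\,\mathbb E|V-1|^p\le C(1-q)^{p/2}$ for $p>2\lambda$ (Lemma~\ref{lem:concentration M}), this already makes that term vanish. For $\lambda<0$, take $p>|\lambda|$ strictly, not $p=|\lambda|$, to get uniform integrability.
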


As a first step, we study a suitable process $(Y_i)_{i\in \llbracket0,N+1\rrbracket}$ which we obtain from the birth-death process $(X_i)_{i\in \llbracket0,N+1\rrbracket}$ by exponentiation. Here, and in the following, we  write
\begin{equation}
    [n]_q := (1-q^n)/(1-q)
\end{equation} for the q-analogue of a natural number $n \in \mathbb N$.  

\begin{lemma}\label{lem:Y chain}
     Assume $q\in(0,1)$ and $N\in\mathbb{N}$. Let $\bs=(\sigma_0,\dots,\sigma_r)$ be a composition of $N+1$ and $\boldsymbol{\mathsf S}^{\bs}\in\{0,1\}^{N+1}$ be the associated $0$-$1$ sequence as in Definition \ref{def:associated 01 seq}. Let $X=X^{\boldsymbol{\mathsf S}^{\bs}}$ be the birth-death chain indexed by $\boldsymbol{\mathsf S}^{\bs}$ as in Definition \ref{def:characterization measure}, and write $\mathsf S=\mathsf S^{\bs}$. Let $Y_i=[i]_q\,q^{-X(i)} $ for $i\in\llbracket0,N+1\rrbracket$. Then $Y_1=1$,
\begin{equation}\label{eq:jtyukmtyufk}
\mathbb E[Y_{i+1}\mid X(i)]=\left(1+(q-1){\bf1}_{\{\mathsf S(i)=1\}}\right)Y_i+1,\quad\quad\mathbb{E}\left[Y_{i+1}\right]=\left(1+(q-1){\bf1}_{\{\mathsf S(i)=1\}}\right) \mathbb{E}\left[Y_i\right]+1 ,
\end{equation}
for $i\in\llbracket1,N\rrbracket$.
Moreover, $\mathbb{E}[Y_i]\ge [i]_q$ for $i\in\llbracket1,N+1\rrbracket$, and
$\mathbb{E}\left[Y_{N+1}\right] 
=
\sum_{\ell=0}^r\sigma_\ell q^\ell$.
\end{lemma}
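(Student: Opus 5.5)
The plan is a direct one-step computation, then induction, then unrolling the linear recursion. First, $Y_1=[1]_q\,q^{-X(1)}=1$ because $X(1)=0$. For the conditional expectation, fix $i\in\llbracket1,N\rrbracket$ and condition on $X(i)=j\in\llbracket0,i-1\rrbracket$, so that $Y_i=[i]_q\,q^{-j}$. Throughout I will use the identity $[i+1]_q=q[i]_q+1$ and treat the two cases of Definition~\ref{def:characterization measure} separately.

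\emph{Case $\mathsf S(i)=1$.} With $p=(1-q^j)/(1-q^{i+1})$ one gets
\[
\mathbb E[q^{-X(i+1)}\mid X(i)=j]=q^{-j}(1-p)+q^{-j+1}p=q^{-j}\bigl(1-(1-q)p\bigr).
\]
Multiplying by $[i+1]_q=(1-q^{i+1})/(1-q)$ gives
\[
q^{-j}[i+1]_q-q^{-j}(1-q^j)=q^{-j}\bigl([i+1]_q-1\bigr)+1=q\,Y_i+1.
\]

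\emph{Case $\mathsf S(i)=0$.} With $p=(q^{j+1}-q^{i+1})/(1-q^{i+1})$ one gets
\[
\mathbb E[q^{-X(i+1)}\mid X(i)=j]=q^{-j}\bigl(1+(q^{-1}-1)p\bigr).
\]
Multiplying by $[i+1]_q$ gives
\[
q^{-j}[i+1]_q+q^{-j-1}(q^{j+1}-q^{i+1})=q^{-j}\bigl([i+1]_q-q^i\bigr)+1=q^{-j}[i]_q+1=Y_i+1.
\]

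Together the two cases give the first identity in \eqref{eq:jtyukmtyufk}. Since $X$ is Markov, conditioning on $X(i)$ is the same as conditioning on the past, and taking expectations gives the second identity.

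For the lower bound I will use induction. The multiplier $1+(q-1)\mathbf 1_{\{\mathsf S(i)=1\}}$ is at least $q$, so $\mathbb E[Y_{i+1}]\ge q\,\mathbb E[Y_i]+1\ge q[i]_q+1=[i+1]_q$, starting from $\mathbb E[Y_1]=1=[1]_q$.

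For the closed form I will unroll the linear recursion. Each step $i\in\llbracket1,N\rrbracket$ contributes an additive $1$ at time $i+1$, and the initial value $Y_1=1$ acts as a contribution at "step $0$". Label these contributions $k\in\llbracket0,N\rrbracket$. Contribution $k$ is then multiplied by $q$ once for each $i\in\llbracket k+1,N\rrbracket$ with $\mathsf S(i)=1$, so
\[
\mathbb E[Y_{N+1}]=\sum_{k=0}^{N}q^{\#\{i\in\llbracket k+1,N\rrbracket:\ \mathsf S(i)=1\}}.
\]
By Definition~\ref{def:associated 01 seq}, the ones sit at the positions $\sigma_r,\ \sigma_r+\sigma_{r-1},\ \dots,\ \sigma_r+\dots+\sigma_1$. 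These are all at most $N$ because $\sigma_0\ge1$, and $\mathsf S(0)$ never enters the recursion. Grouping the indices $k$ into consecutive blocks between these positions gives:
\begin{itemize}
\item $k\in\llbracket0,\sigma_r-1\rrbracket$: $\sigma_r$ terms, each with exponent $r$;
\item the next $\sigma_{r-1}$ values of $k$: exponent $r-1$;
\item and so on, down to the last block $k\in\llbracket\sigma_r+\dots+\sigma_1,N\rrbracket$: $N+1-(\sigma_1+\dots+\sigma_r)=\sigma_0$ terms, each with exponent $0$.
\end{itemize}
Summing over the blocks yields $\sum_{\ell=0}^r\sigma_\ell q^\ell$.

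The only delicate point is this index bookkeeping: aligning the "step $k$" contributions with the block boundaries, so that the block sizes come out exactly as $\sigma_\ell$, including the endpoint conventions for $\sigma_0$ and $\sigma_r$. The algebra in the two cases is routine once $[i+1]_q=q[i]_q+1$ is used.
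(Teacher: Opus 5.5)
Your proposal is correct and follows essentially the same route as the paper: the same two-case one-step computation (you also write out the algebra that the paper only states), the same induction from $\mathbb E[Y_{i+1}]\ge q\,\mathbb E[Y_i]+1$, and the same unrolling of the affine recursion for the closed form. The only cosmetic difference is that you write $\mathbb E[Y_{N+1}]=\sum_{k=0}^{N}q^{\#\{i\in\llbracket k+1,N\rrbracket:\,\mathsf S(i)=1\}}$ and count the block sizes directly, whereas the paper inducts block by block via $\mathbb E[Y_{p_{\ell-1}}]=q\,\mathbb E[Y_{p_\ell}]+\sigma_{\ell-1}$. The two arguments are equivalent, and your block bookkeeping, including the $r=0$ case, checks out.
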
 

\begin{proof} In this proof, we denote $X=X^{\boldsymbol{\mathsf S}^{\bs}}$ and $\mathsf S=\mathsf S^{\bs}$. Note $Y_1=1$ since $X(1)=0$. Fix $i\in\llbracket1,N\rrbracket$ and $j\in\llbracket0,i-1\rrbracket$. We compute the one-step transition-kernel expectation from the state \(j\). If $\mathsf{S}(i)=0$, then
\[
\mathbb{P}\left(X(i+1)=j+1 \mid X(i)=j\right)=\frac{q^{j+1}-q^{i+1}}{1-q^{i+1}},\qquad
\mathbb{P}\left(X(i+1)=j  \mid X(i)=j\right)=\frac{1-q^{j+1} }{1-q^{i+1}},
\]
and hence 
$\mathbb E[Y_{i+1}\mid X(i)=j]=[i]_q q^{-j}+1$.
If $\mathsf{S}(i)=1$, then 
\[
\mathbb{P}\left(X(i+1)=j-1 \mid X(i)=j\right)=\frac{1-q^{j}}{1-q^{i+1}},\qquad
\mathbb{P}\left(X(i+1)=j \mid X(i)=j\right)=\frac{ q^{j}-q^{i+1}}{1-q^{i+1}},
\]
and hence
$\mathbb E[Y_{i+1}\mid X(i)=j]=q[i]_q q^{-j}+1 $.
Since \(Y_i=[i]_q q^{-X(i)}\) and \(j\) was arbitrary, these two identities imply the first equality in \eqref{eq:jtyukmtyufk}; the second equality follows from taking expectations.
Note that \eqref{eq:jtyukmtyufk} implies $\mathbb{E} [Y_{i+1} ]\geq q \mathbb{E} [Y_i ]+1$, which, by induction, implies $\mathbb{E}[Y_i]\ge [i]_q$ for $i\in\llbracket1,N+1\rrbracket$, since we have  $[i+1]_q=q[i]_q+1$.
For the last statement, for \(\ell\in\llbracket0,r\rrbracket\), set \(p_\ell=\sigma_r+\cdots+\sigma_\ell\). By Definition \ref{def:associated 01 seq}, these \(p_\ell\)'s except for $p_0=N+1$ are exactly the indices \(i\in\llbracket1,N\rrbracket\) for which \(\mathsf S(i)=1\). If \(r=0\), then the above recursion gives \(\mathbb E[Y_{N+1}] =N+1=\sigma_0\). Assume \(r\ge1\). Starting from \(\mathbb E[Y_1]=1\), the recurrence gives \(\mathbb E[Y_{p_r}] =\sigma_r\). If
\[
\mathbb E[Y_{p_{\ell}}]=\sum_{a=\ell}^r\sigma_a q^{a-\ell}
\]
for some \(\ell\in\llbracket1,r\rrbracket\), then the update at \(p_\ell\) and the \(\sigma_{\ell-1}-1\) following updates with \(\mathsf S(i)=0\) give
\[
\mathbb E[Y_{p_{\ell-1}}]=q\,\mathbb E[Y_{p_{\ell}}]+\sigma_{\ell-1}
=\sum_{a=\ell-1}^r\sigma_a q^{a-\ell+1}.
\]
Therefore,
\[
\mathbb E[Y_{N+1}]= \mathbb E[Y_{p_0}]=\sum_{\ell=0}^r\sigma_\ell q^\ell .
\]
We conclude the proof.
\end{proof}

Next, we study the law of the process $(Y_i)_{i\in \llbracket0,N+1\rrbracket}$ at time $i+1$ when conditioning on the trajectory up to time $i$, compare it to the unconditioned process, and bound the moments of the differences.

\begin{lemma}\label{lem:center Y chain}
Assume the same notation as Lemma \ref{lem:Y chain}. Assume $q\in[1/2,1)$. Let $\mathcal{F}_i=\sigma(Y_1,\dots,Y_i)$ for $i\in\llbracket1,N+1\rrbracket$ and $\Delta_{i+1}=Y_{i+1}-\mathbb E[Y_{i+1}\mid\mathcal F_i]$.
 Then for every \(p\ge2\) and $i\in\llbracket1,N\rrbracket$, 
\begin{equation*}
    \mathbb E\left[\left|\Delta_{i+1}\right|^p\mid\mathcal F_i\right]
\le
2^p(1-q)^{p-1}(Y_i+1)^{p-1} .
\end{equation*} 
\end{lemma}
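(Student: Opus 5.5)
Since $X$ is Markov and $Y_i=[i]_q q^{-X(i)}$ determines $X(i)$, conditioning on $\mathcal F_i$ is the same as conditioning on $X(i)$. Hence $\Delta_{i+1}$ given $\mathcal F_i$ is a centered variable taking two values. Fix $i\in\llbracket1,N\rrbracket$ and $X(i)=j$, and write $Y_i=y=[i]_q q^{-j}$. The plan is to compute the two possible values of $Y_{i+1}$ explicitly and bound $|\Delta_{i+1}|$ pointwise. The key identity is $[i+1]_q=q[i]_q+1$, so
\[
[i+1]_q q^{-j} = q y + q^{-j}.
\]

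\emph{Case $\mathsf S(i)=0$.} If the chain stays at $j$, then $Y_{i+1}=qy+q^{-j}$. If it jumps to $j+1$, then $Y_{i+1}=y+q^{-j-1}$. By Lemma \ref{lem:Y chain} the conditional mean is $y+1$. The two deviations from the mean are
\[
qy+q^{-j}-y-1=(q^{-j}-1)-(1-q)y, \qquad y+q^{-j-1}-y-1=q^{-j-1}-1 .
\]

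\emph{Case $\mathsf S(i)=1$.} The possible values are $qy+q^{-j}$ (stay) and $qy+q^{-j+1}$ (step to $j-1$). The mean is $qy+1$, so the deviations are $q^{-j}-1$ and $q^{-j+1}-1$, both nonnegative.

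In both cases it remains to bound $q^{-j}-1$, and this is the main step. Since $j\le i-1$,
\[
q^{-j}-1=(1-q)q^{-j}[j]_q\le (1-q)q^{-j}[i]_q=(1-q)y,
\]
and also $q^{-j-1}-1\le q^{-1}(1-q)y+(q^{-1}-1)\le 2(1-q)(y+1)$ because $q\ge1/2$. Hence in every case
\[
|\Delta_{i+1}|\le 2(1-q)(y+1)
\]
pointwise. For the case $\mathsf S(i)=0$, stay branch, we have $|(q^{-j}-1)-(1-q)y|\le \max\{q^{-j}-1,(1-q)y\}\le(1-q)y$.

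The pointwise bound alone only gives $(1-q)^p$ with power $p$ on $(Y_i+1)$, whereas the statement has exponent $p-1$ on both. To fix this, write
\[
\mathbb E\big[|\Delta_{i+1}|^p\mid\mathcal F_i\big]\le \|\Delta_{i+1}\|_\infty^{p-2}\,\mathbb E\big[\Delta_{i+1}^2\mid\mathcal F_i\big].
\]
We need $\mathbb E[\Delta_{i+1}^2\mid\mathcal F_i]\le 4(1-q)(y+1)$. For a centered two-point variable with values $a\ge 0\ge -b$, the variance is $ab$. So
\[
\mathbb E[\Delta_{i+1}^2\mid\mathcal F_i]\le ab\le \max(a,b)\,\min(a,b),
\]
and the plan is to show that one of the two deviations is at most $1$ times a constant. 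Actually an easier route is available: the variance is at most $\max(a,b)\cdot\mathbb E|\Delta_{i+1}|$, and $\mathbb E|\Delta_{i+1}|=2\,\mathbb E[(\Delta_{i+1})_+]$. Then we check directly that the positive part times its probability is $O(1)$.

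For example, in the case $\mathsf S(i)=0$, the positive deviation is $q^{-j-1}-1$ and occurs with probability $(q^{j+1}-q^{i+1})/(1-q^{i+1})$. Their product is at most $(1-q^{j+1})/(1-q^{i+1})\le1$. For $\mathsf S(i)=1$, use the negative side: the centering forces the positive-part mass to equal the negative-part mass, and the deviation $1-q^{-j+1}$ relative to the mean is handled similarly, with product bounded by $1$ using the probability $(1-q^j)/(1-q^{i+1})$ and $q^{-j}(1-q^j)\le q^{-j}[i]_q(1-q)$; this needs care. This yields $\mathbb E|\Delta_{i+1}|\le 2$, so the conditional variance is at most $2\cdot 2(1-q)(y+1)$. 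Combining,
\[
\mathbb E\big[|\Delta_{i+1}|^p\mid\mathcal F_i\big]\le \big(2(1-q)(y+1)\big)^{p-2}\cdot 4(1-q)(y+1)=2^p(1-q)^{p-1}(Y_i+1)^{p-1}.
\]
I expect the bound $\mathbb E[|\Delta_{i+1}|\mid\mathcal F_i]\le 2$, uniform in $j$, to be the delicate step, because the algebra must be checked separately in each case.
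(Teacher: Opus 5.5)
Your overall strategy is sound, but the case $\mathsf S(i)=1$ is computed incorrectly, so as written the proof is incomplete there. The strategy itself is the paper's argument in slightly different packaging. For a centered two-point variable with gap $d$ and probabilities $\theta,1-\theta$, one has $\mathbb E|\Delta|=2\theta(1-\theta)d$. So your target $\mathbb E[|\Delta_{i+1}|\mid\mathcal F_i]\le 2$ is exactly the paper's bound $\theta(1-\theta)d\le1$. Likewise, your chain $\mathbb E|\Delta|^p\le\|\Delta\|_\infty^{p-2}\mathbb E\Delta^2\le\|\Delta\|_\infty^{p-1}\mathbb E|\Delta|$ plays the role of the paper's $\mathbb E|Z-\mathbb EZ|^p\le\theta(1-\theta)d^p$. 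Your case $\mathsf S(i)=0$ is correct.

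In the case $\mathsf S(i)=1$, the step-down value is $[i+1]_q q^{-(j-1)}=q\,(qy+q^{-j})=q^2y+q^{1-j}$, not $qy+q^{1-j}$. Your conclusion that both deviations are nonnegative should have flagged this, since a centered variable with two nonnegative values is identically zero. With the correct value, the stay deviation is $q^{-j}-1\ge0$, with probability $(q^j-q^{i+1})/(1-q^{i+1})$. The step-down deviation is
\[ q^2y+q^{1-j}-qy-1=q^{1-j}-1-(1-q^i)q^{1-j}=-(1-q^{i+1-j}), \]
with probability $(1-q^j)/(1-q^{i+1})$. Consequently, neither your pointwise bound in this case nor your sketch of $\mathbb E|\Delta_{i+1}|\le2$ stands as written. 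You left that sketch as ``needs care'' and phrased it via a deviation $1-q^{1-j}$ that does not occur.

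The repair is short.

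\emph{Pointwise bound.} We have $1-q^{i+1-j}=(1-q)[i+1-j]_q\le(1-q)[i+1]_q\le(1-q)(y+1)$, using $[i+1]_q=q[i]_q+1\le y+1$. Together with your bound $q^{-j}-1\le(1-q)y$, this gives $|\Delta_{i+1}|\le(1-q)(y+1)$ when $\mathsf S(i)=1$.

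\emph{First absolute moment.} In both cases the negative deviation lies in $[-1,0]$: it equals $q^{i-j}-1$ when $\mathsf S(i)=0$ and $q^{i+1-j}-1$ when $\mathsf S(i)=1$. Equivalently, $Y_{i+1}\ge Y_i$ if $\mathsf S(i)=0$ and $Y_{i+1}\ge qY_i$ if $\mathsf S(i)=1$, deterministically, so $\Delta_{i+1}\ge-1$. Hence $\mathbb E[|\Delta_{i+1}|\mid\mathcal F_i]=2\,\mathbb E[(\Delta_{i+1})_-\mid\mathcal F_i]\le2$ with no case-specific algebra. This is exactly the ``one deviation is $O(1)$'' route you considered and then abandoned.

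With these two facts, the rest of your argument goes through and yields $2^p(1-q)^{p-1}(Y_i+1)^{p-1}$. The paper avoids this sign bookkeeping altogether. It works only with the gap $d$ between the two support points, which automatically dominates $|\Delta_{i+1}|$, and computes $\theta(1-\theta)d$ directly.
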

\begin{proof}
For a two-point random variable \(Z\) with values separated by \(d\) and probabilities \(\theta\) and \(1-\theta\),
\begin{equation}\label{eq:two point RW}
\mathbb E|Z-\mathbb EZ|^p
=
\theta(1-\theta)\left((1-\theta)^{p-1}+\theta^{p-1}\right)d^p
\le
\theta(1-\theta)d^p .
\end{equation}
In this proof, we write \(X=X^{\boldsymbol{\mathsf S}^{\bs}}\) and fix $i\in\llbracket1,N\rrbracket$. Since \(q\in(0,1)\), the map \(x\mapsto [k]_q q^{-x}\) is injective for each \(k\ge1\), and hence \(\mathcal F_i=\sigma(X(1),\dots,X(i))\). By the Markov property, \(\mathbb E[Y_{i+1}\mid\mathcal F_i]=\mathbb E[Y_{i+1}\mid X(i)]\). Therefore it suffices to prove the desired bound on each event \(\{X(i)=j\}\) with positive probability, where \(j\in\llbracket0,i-1\rrbracket\). 
If \(\mathsf S^{\bs}(i)=0\), then \(Y_{i+1}\) is supported on \([i+1]_q q^{-j}\) and \([i+1]_q q^{-j-1}\).  The distance is
$d=(q^{-1}-1)[i+1]_q q^{-j}\le 2(1-q)([i]_q q^{-j}+1)$,
since \(q\ge1/2\) and \([i+1]_q q^{-j}=[i]_q q^{-j}+q^{i-j}\le [i]_q q^{-j}+1\).  The two probabilities are \(\theta\) and \(1-\theta\), where $\theta=(q^{j+1}-q^{i+1})/(1-q^{i+1})$. Using the fact that \((1-a)(1-b)\le 1-ab\) for \(a,b\in[0,1]\),  
\[
\theta(1-\theta)d
=
\left(1-q^{i-j}\right)\left(1-q^{j+1}\right)\left(1-q^{i+1} \right)^{-1}
\le 1.
\]
If \(\mathsf S^{\bs}(i)=1\), then \(Y_{i+1}\) is supported on $[i+1]_q q^{-j}$ and $[i+1]_q q^{-j+1}$. 
The distance is
$d=(1-q)[i+1]_q q^{-j}\le (1-q)([i]_q q^{-j}+1)$.
The two probabilities are \(\theta\) and \(1-\theta\), where $\theta=(1-q^j)/(1-q^{i+1})$. Again, we have
\[
\theta(1-\theta)d
=
 \left(1-q^j\right)\left(1-q^{i+1-j}\right)\left(1-q^{i+1}\right)^{-1}
\le1.
\] 
Plugging in the upper bounds for \(d\) and the displayed bounds on \(\theta(1-\theta)d\) in both cases into \eqref{eq:two point RW},
\[ 
\mathbb E\left[\left|\Delta_{i+1}\right|^p\mid X(i)=j\right]
 \le
\theta(1-\theta)d^p \le
2^p (1-q)^{p-1}\left([i]_q q^{-j}+1\right)^{p-1}. 
\]
Since we have $X(i)=j$ if and only if $Y_i=[i]_q q^{-j}$, and the events \(\{X(i)=j\}\) partition the state space up, we conclude the proof.
\end{proof}

Next, we use Lemma~\ref{lem:Y chain} to control for all $i\in \llbracket 1,N\rrbracket$ the moments of $Y_{i+1}$ in terms of the respective moments of $Y_i$.

\begin{lemma}\label{lem:martingale V}
Assume the same notation as Lemma \ref{lem:Y chain}  and  $q\in[1/2,1)$. Then for every \(p\ge2\) and $i\in\llbracket1,N\rrbracket$,
\[
\mathbb E\left[V_{i+1}^p\right] 
\le
\left(1+ p(p-1)\left(2^{4p-4}+2^{2p}\right) (1-q) \cdot m_{i+1}^{-1}\right)\cdot \mathbb E\left[V_i^p\right],
\]
where we set $m_k=\mathbb E[Y_k]$ and $V_k=Y_k/m_k$.
\end{lemma}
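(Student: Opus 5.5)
Write $a_i=1+(q-1)\mathbf 1_{\{\mathsf S(i)=1\}}\in\{q,1\}$. Lemma~\ref{lem:Y chain} gives $\mathbb E[Y_{i+1}\mid\mathcal F_i]=a_iY_i+1$ and $m_{i+1}=a_im_i+1$. Dividing by $m_{i+1}$, we write
\[
V_{i+1}=A_i+D_{i+1},\qquad A_i:=\frac{a_iY_i+1}{m_{i+1}}=\frac{a_im_iV_i+1}{m_{i+1}},\qquad D_{i+1}:=\frac{\Delta_{i+1}}{m_{i+1}}.
\]
Here $A_i$ is $\mathcal F_i$-measurable and $\mathbb E[D_{i+1}\mid\mathcal F_i]=0$. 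The plan is to bound $\mathbb E[A_i^p]$ by $\mathbb E[V_i^p]$ exactly (no loss), and to show that the martingale increment only contributes a multiplicative error of order $(1-q)/m_{i+1}$.

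\emph{Step 1: the drift part.} $A_i$ is a convex combination of $V_i$ and the constant $1$, with weights $a_im_i/m_{i+1}$ and $1/m_{i+1}$, which sum to one. By convexity of $x\mapsto x^p$ we get $A_i^p\le \frac{a_im_i}{m_{i+1}}V_i^p+\frac1{m_{i+1}}$. Since $\mathbb E[V_i]=1$, Jensen gives $\mathbb E[V_i^p]\ge1$, so $\mathbb E[A_i^p]\le\mathbb E[V_i^p]$. Alternatively, $\mathbb E[A_i^p]\le \mathbb E[V_i^p]$ follows directly from the convex-combination bound after taking expectations.

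\emph{Step 2: the martingale increment.} I will use the elementary inequality, valid for $p\ge2$, $x\ge0$ and $x+y\ge0$:
\[
(x+y)^p\le x^p+px^{p-1}y+\tfrac{p(p-1)}2\,2^{p-2}\bigl(x^{p-2}y^2+|y|^p\bigr).
\]
It follows from Taylor's formula with the bound $(x+\theta y)^{p-2}\le 2^{p-3}(x^{p-2}+|y|^{p-2})$, or a similar crude version. Apply it conditionally with $x=A_i$ and $y=D_{i+1}$. The linear term vanishes. Lemma~\ref{lem:center Y chain} gives
\[
\mathbb E[D_{i+1}^2\mid\mathcal F_i]\le \frac{4(1-q)(Y_i+1)}{m_{i+1}^2},\qquad \mathbb E[|D_{i+1}|^p\mid\mathcal F_i]\le \frac{2^p(1-q)^{p-1}(Y_i+1)^{p-1}}{m_{i+1}^p}.
\]
Now express everything through $A_i$. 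Since $a_i\ge q\ge1/2$, we have $Y_i+1\le 2(a_iY_i+1)=2m_{i+1}A_i$. Substituting gives
\[
\mathbb E[A_i^{p-2}D_{i+1}^2\mid\mathcal F_i]\le \frac{8(1-q)}{m_{i+1}}A_i^{p-1},\qquad
\mathbb E[|D_{i+1}|^p\mid\mathcal F_i]\le 2^{2p-1}\frac{(1-q)^{p-1}}{m_{i+1}}A_i^{p-1}.
\]
For the second bound we used $m_{i+1}\ge1$ and $(1-q)^{p-1}\le(1-q)$.

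\emph{Step 3: closing.} Both error terms are of the form $C_p(1-q)m_{i+1}^{-1}A_i^{p-1}$. To compare $\mathbb E[A_i^{p-1}]$ with $\mathbb E[A_i^{p}]$, use $A_i^{p-1}\le A_i^p+1$ together with $\mathbb E[A_i^p]\ge(\mathbb E A_i)^p=1$. This gives $\mathbb E[A_i^{p-1}]\le 2\mathbb E[A_i^p]$; in fact Lyapunov already yields $\mathbb E[A_i^{p-1}]\le\mathbb E[A_i^p]$, since $\mathbb E[A_i^p]\ge1$. Combining with Step~1,
\[
\mathbb E[V_{i+1}^p]\le\Bigl(1+c_p\,(1-q)\,m_{i+1}^{-1}\Bigr)\mathbb E[V_i^p].
\]

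The main obstacle is bookkeeping: obtaining exactly the constant $p(p-1)(2^{4p-4}+2^{2p})$ from the Taylor remainder constant and the factors $2^p$ and $2$ above. I expect the crude choice $(x+\theta y)^{p-2}\le 2^{p-2}(x^{p-2}+|y|^{p-2})$ to land within the stated constant, since the target constant is generous.
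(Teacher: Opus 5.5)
Your proposal is correct and follows essentially the paper's proof. Your $A_i$ and $D_{i+1}$ are exactly the paper's $x=b_iV_i+(1-b_i)$ and $u=\Delta_{i+1}/m_{i+1}$, and you use the same Taylor inequality, the same conditional bounds from Lemma~\ref{lem:center Y chain}, and the same convexity step giving $\mathbb E[A_i^p]\le\mathbb E[V_i^p]$.

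The only difference is bookkeeping. You bound $Y_i+1\le 2m_{i+1}A_i$ and use $\mathbb E[A_i^{p-1}]\le\mathbb E[A_i^p]$, instead of passing through $(V_i+1)^{p-1}$ as the paper does. This gives the smaller constant $p(p-1)(2^p+2^{3p-4})$, which lies within the stated one.

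One small correction: the bound $(x+\theta y)^{p-2}\le 2^{p-3}(x^{p-2}+|y|^{p-2})$ fails for $p\in(2,3)$ (take $y=0$). Your $2^{p-2}$ version is valid and suffices.
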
 
\begin{proof}
We will use the following deterministic inequality: for any \(x>0\), \(x+u\ge0\), and \(p\ge2\),
\begin{equation}\label{eq:elementarytaylor}
(x+u)^p
\le
x^p+p x^{p-1}u+p(p-1)2^{p-2}\left(x^{p-2}u^2+|u|^p\right). 
\end{equation}
Indeed, Taylor's formula gives
\[
(x+u)^p=x^p+p x^{p-1}u
+p(p-1)u^2\int_0^1(1-s)(x+su)^{p-2}\,ds,
\]
and, since \(x+su\ge0\) for \(s\in[0,1]\) for the first inequality, as well as  $x+|u|\leq 2 \max(x,|u|)$ for the second inequality, we have
\[
u^2(x+su)^{p-2}
\le u^2(x+|u|)^{p-2}
\le 2^{p-2}\left(x^{p-2}u^2+|u|^p\right),
\]
and we conclude \eqref{eq:elementarytaylor}.
In the rest of the proof, we denote $\mathcal F_i=\sigma(Y_1,\dots,Y_i)$,
    \[
     a_i= 1+(q-1){\bf1}_{\{\mathsf S^{\bs }(i)=1\}}\quad\mbox{and}\quad b_i= \frac{a_i m_i}{m_{i+1}}.
    \]
Since \(q\in(0,1)\), for each \(1\le k\le i\) the variable
\(Y_k=[k]_q q^{-X^{\boldsymbol{\mathsf S}^{\bs}}(k)}\) determines
\(X^{\boldsymbol{\mathsf S}^{\bs}}(k)\). Hence
$\mathcal F_i
=
\sigma\left(X^{\boldsymbol{\mathsf S}^{\bs}}(1),\dots,
X^{\boldsymbol{\mathsf S}^{\bs}}(i)\right)$.
By the Markov property of \(X^{\boldsymbol{\mathsf S}^{\bs}}\) and Lemma
\ref{lem:Y chain},
$\mathbb E[Y_{i+1}\mid\mathcal F_i]=a_iY_i+1 $.
Taking expectations gives \(m_{i+1}=a_i m_i+1\), hence \(b_i\in(0,1)\).
Set
\begin{equation}\label{eq:xu}
\Delta_{i+1}=Y_{i+1}-\mathbb E[Y_{i+1}\mid\mathcal F_i],
\qquad
x=b_iV_i+(1-b_i),
\qquad
u=\frac{\Delta_{i+1}}{m_{i+1}}.
\end{equation} 
Note that \(x=(a_iY_i+1)/m_{i+1}>0\) and \(x+u=V_{i+1}\ge0\). Applying
\eqref{eq:elementarytaylor} with these choices, taking conditional
expectations, and using \(\mathbb E[u\mid\mathcal F_i]=0\), we obtain 
\begin{equation}\label{eq:brtnwyn}
\mathbb E[V_{i+1}^p\mid\mathcal F_i]
\le
x^p+p(p-1)2^{p-2}\left( x^{p-2}\mathbb E[u^2\mid\mathcal F_i]
+ \mathbb E[|u|^p\mid\mathcal F_i]\right).
\end{equation} 
We next bound the terms on the right side above. Using Lemma \ref{lem:center Y chain} with exponents \(2\) and \(p\), we get
\[
\mathbb E[u^2\mid\mathcal F_i]
\le
4\frac{(1-q)(Y_i+1)}{m_{i+1}^2},
\qquad
\mathbb E[|u|^p\mid\mathcal F_i]
\le
2^p\frac{(1-q)^{p-1}(Y_i+1)^{p-1}}{m_{i+1}^p}.
\]
Combining with \(x\le V_i+1\) since \(b_i\in(0,1)\) and
\(V_i\ge0\), with  \(Y_i+1=m_iV_i+1\le
2m_{i+1}(V_i+1)\) since \(m_{i+1}=a_i m_i+1\ge q m_i\),
\(m_{i+1}\ge1\), and \(q\ge1/2\), and with
\((1-q)^{p-1}\le 1-q\), the above display implies 
\[
x^{p-2}\mathbb E[u^2\mid\mathcal F_i]
+\mathbb E[|u|^p\mid\mathcal F_i]
\le
\left(8+2^{2p-1}\right)\frac{1-q}{m_{i+1}}(V_i+1)^{p-1}.
\]
Combining with \eqref{eq:brtnwyn},
\begin{equation}\label{eq:73563}
\mathbb E[V_{i+1}^p\mid\mathcal F_i]
\le
x^p+p(p-1)2^{p-2}\left(8+2^{2p-1}\right)\frac{1-q}{m_{i+1}}(V_i+1)^{p-1}.
\end{equation}
Convexity gives \(x^p\le b_iV_i^p+(1-b_i)\).    Since \(\mathbb E[V_i]=1\), Jensen's inequality gives \(\mathbb E[V_i^p]\ge1\), and hence
\[
\mathbb E[x^p]\le b_i\mathbb E[V_i^p]+(1-b_i)\le \mathbb E[V_i^p].
\]
Moreover,
\[
\mathbb E[(V_i+1)^{p-1}]
\le 2^{p-2}\left(\mathbb E[V_i^{p-1}]+1\right)
\le 2^{p-2}\left(\mathbb E[V_i^p]^{(p-1)/p}+1\right)
\le 2^{p-1}\mathbb E[V_i^p].
\]
Taking expectations in \eqref{eq:73563} and combining with the above display, we conclude the proof.
\end{proof}

The preceding steps yield now for all $p\geq 2$ a uniform bound on the $p^{\text{th}}$ moments of $Y_i$, uniformly in $i\in \llbracket 1,N\rrbracket$ and the chosen composition $\bs$.

\begin{lemma}\label{lem:maximum V}
Fix \(L>0\) and \(p\ge2\). Assume the same notation as Lemma \ref{lem:Y chain}, with $q=e^{-2/\sqrt n}$ and $N=\lfloor nL\rfloor$ for $n\in\mathbb N$ large enough so that $q\geq1/2$ and $N\geq1$. 
Denote $m_k=\mathbb E[Y_k]$ and $V_k=Y_k/m_k$.
Then there exists a finite constant \(C_{L,p}\), depending only on \(L\) and \(p\), such that, for every such \(n\),
\[
\sup_{\bs}\sup_{ i\in\llbracket1, N+1\rrbracket}\mathbb E\left[V_i^p\right]\le C_{L,p},
\]
where the first supremum is over all compositions \(\bs\) of \(N+1\).
\end{lemma}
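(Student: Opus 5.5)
Iterating Lemma~\ref{lem:martingale V} from $i=1$ is the natural route. Since $Y_1=1$ deterministically, we have $m_1=1$ and $V_1=1$, so $\mathbb E[V_1^p]=1$. Write $K_p=p(p-1)(2^{4p-4}+2^{2p})$. Chaining the one-step bound of Lemma~\ref{lem:martingale V} and using $1+t\le e^t$ gives, for every $i\in\llbracket1,N+1\rrbracket$,
\[
\mathbb E[V_i^p]\le\prod_{k=2}^{i}\left(1+K_p(1-q)m_k^{-1}\right)\le\exp\Big(K_p(1-q)\sum_{k=2}^{N+1}m_k^{-1}\Big).
\]
So everything reduces to bounding $(1-q)\sum_{k=2}^{N+1}m_k^{-1}$ uniformly in the composition $\bs$ and in $n$.

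For this I use the lower bound $m_k=\mathbb E[Y_k]\ge[k]_q$ from Lemma~\ref{lem:Y chain}, which holds regardless of $\bs$. Then
\[
(1-q)\sum_{k=2}^{N+1}\frac{1}{[k]_q}=(1-q)^2\sum_{k=2}^{N+1}\frac{1}{1-q^k}.
\]
With $q=e^{-2/\sqrt n}$ and $k\le N+1\le nL+1$, I estimate $1-q^k=1-e^{-2k/\sqrt n}$ by splitting into two ranges.
\begin{itemize}
\item For $k\le\sqrt n$, concavity gives $1-e^{-x}\ge(1-e^{-2})x/2$ on $[0,2]$, so $1-q^k\ge c\,k/\sqrt n$.
\item For $k>\sqrt n$, we have $1-q^k\ge 1-e^{-2}$.
\end{itemize}
Also $1-q\le 2/\sqrt n$. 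Hence the small-$k$ range contributes at most $(4/n)(\sqrt n/c)\sum_{k\le\sqrt n}1/k=O(\log n/\sqrt n)$. The large-$k$ range contributes at most $(4/n)(nL+1)/(1-e^{-2})=O(L)$. So the exponent is bounded by a constant depending only on $L$ and $p$, uniformly in $n$ (with $q\ge1/2$, $N\ge1$) and in $\bs$.

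Taking $C_{L,p}=\exp(K_p\cdot\sup_n(\ldots))$, the bound holds for all $i\le N+1$ simultaneously, since the product is monotone in $i$. The only mildly delicate step is the harmonic-type sum near $k$ small. This is where $1/[k]_q\approx 1/k$, and the extra factor $(1-q)$ tames the resulting logarithm. The rest is bookkeeping.
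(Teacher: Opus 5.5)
Your proof is correct and follows essentially the same route as the paper: iterate Lemma~\ref{lem:martingale V} from $V_1=1$, use $1+t\le e^t$ and $m_k\ge[k]_q$, and bound $(1-q)^2\sum_{k=2}^{N+1}(1-q^k)^{-1}$ by splitting it into a small-$k$ harmonic range, where the extra factor $1-q$ absorbs the logarithm, and a large-$k$ range contributing $O((1-q)^2N)=O(L+1)$. The only cosmetic difference is that you split at $k\approx\sqrt n$ using concavity of $1-e^{-x}$, whereas the paper splits at $k\approx(1-q)^{-1}$ via $1-q^k\ge(1-e^{-1})\min(1,k(1-q))$.
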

\begin{proof}
We have for some constant $C_{L}>0$
\begin{equation}\label{eq:ndtmeum}
        \begin{split}
           & \sum_{i=1}^N\frac{1-q}{m_{i+1}}
\le
\sum_{k=2}^{N+1}\frac{(1-q)^2}{1-q^k}
\leq  
\left(1-e^{-1}\right)^{-1}\sum_{k=2}^{N+1}\frac{(1-q)^2}{\min(1,k(1-q))}  \\
&\le
\left(1-e^{-1}\right)^{-1}(1-q)\sum_{  k\in\llbracket2, \min (N+1,\lfloor (1-q)^{-1}\rfloor)\rrbracket}\frac1k
+
\left(1-e^{-1}\right)^{-1}(1-q)^2 N\le C_L ,
        \end{split}
    \end{equation}
where the first step follows from \(m_k\ge [k]_q=(1-q^k)/(1-q)\) by Lemma \ref{lem:Y chain}, the second step follows from 
\[
 1-q^k\ge1-e^{-k(1-q)}\geq \left(1-e^{-1}\right)\min(1,k(1-q)),
\]
the third step splits the summation over $k$ into two parts, and the fourth step uses 
\[
(1-q)\sum_{k=2}^{\lfloor(1-q)^{-1}\rfloor}\frac1k
\le
\sup_{x\in(0,1/2]}x\left(1+\log\left(x^{-1}\right)\right)<\infty,
\qquad
(1-q)^2N\le 4L ,
\]
where we have used \(1-q\le 2/\sqrt n\) together with a change of variables $x=1-q$. For every \(i\in\llbracket1,N+1\rrbracket\),
\[
\mathbb E\left[V_i^p\right]
\le
\prod_{j=1}^{i-1}\left(1+A_p\frac{1-q}{m_{j+1}}\right)
\le
\exp\left(A_p\sum_{j=1}^{N}\frac{1-q}{m_{j+1}}\right)
\le e^{A_pC_L},
\]
where we write \(A_p=p(p-1)(2^{4p-4}+2^{2p})\). Here, the first step uses \(V_1=1\) combined with Lemma \ref{lem:martingale V}; the second step uses $1+x\leq e^x$ for $x\geq0$; and the third step uses \eqref{eq:ndtmeum}. Combining the constants $A_p$ and $C_L$, we conclude the proof.
\end{proof}

Next, we aim to show Lemma~\ref{lem:concentration M} below, stating that the renormalized birth-death chain endpoint $V_{N+1}$ is concentrated around $1$. To this end, we will use Rosenthal's inequality on martingale differences, which we recall as the following lemma.

\begin{lemma}\label{lem:martingale cite}
    Let \(m\in\mathbb N\), let \((\xi_i)_{i=1}^m\) be real-valued martingale differences with respect to a filtration \((\mathcal G_i)_{i=0}^m\), and assume that \(\mathbb E|\xi_i|^p<\infty\) for every \(i\in\llbracket 1,m\rrbracket\), for some \(2\le p<\infty\). Then there exists a finite constant \(C_p<\infty\), depending only on \(p\), such that
  \[
\mathbb E\left|\sum_{i=1}^m \xi_i\right|^p
\le
C_p\,
\mathbb E\left(\sum_{i=1}^m
\mathbb E\left[\xi_i^2\mid\mathcal G_{i-1}\right]\right)^{p/2}
+
C_p\sum_{i=1}^m\mathbb E|\xi_i|^p .
\]
\end{lemma}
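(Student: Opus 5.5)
This is the classical Burkholder--Rosenthal inequality for discrete martingales (see e.g.\ Hall and Heyde, \emph{Martingale Limit Theory and Its Application}, Theorem~2.11, or Burkholder's 1973 Annals paper). I would simply cite it. If a self-contained argument is wanted, I would proceed in two steps.

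\textbf{Step 1: reduction to the square function.} Set \(S_m=\sum_{i=1}^m\xi_i\). Burkholder's square function inequality gives, for \(p\ge2\),
\[
\mathbb E|S_m|^p\le c_p\,\mathbb E\Bigl(\sum_{i=1}^m\xi_i^2\Bigr)^{p/2}.
\]
Write \(Q=\sum_{i=1}^m\xi_i^2\) and \(\widetilde Q=\sum_{i=1}^m\mathbb E[\xi_i^2\mid\mathcal G_{i-1}]\). It then remains to show
\[
\mathbb E\,Q^{p/2}\le C_p\,\mathbb E\,\widetilde Q^{\,p/2}+C_p\sum_{i=1}^m\mathbb E|\xi_i|^p .
\]

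\textbf{Step 2: comparing \(Q\) with \(\widetilde Q\), by induction on dyadic ranges of \(p\).} Decompose \(Q=\widetilde Q+D\), where \(D=\sum_{i=1}^m d_i\) and \(d_i=\xi_i^2-\mathbb E[\xi_i^2\mid\mathcal G_{i-1}]\) are again martingale differences. Then
\[
\mathbb E\,Q^{p/2}\le 2^{p/2}\bigl(\mathbb E\,\widetilde Q^{\,p/2}+\mathbb E|D|^{p/2}\bigr).
\]
\emph{Base case \(p\in[2,4]\).} Here \(r=p/2\in[1,2]\). The von Bahr--Esseen inequality (or the triangle inequality if \(r=1\)) gives
\[
\mathbb E|D|^{r}\le 2\sum_{i=1}^m\mathbb E|d_i|^{r}.
\]
By conditional Jensen, \(\mathbb E|d_i|^{r}\le 2^{r}\,\mathbb E|\xi_i|^{p}\), which closes this case.

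\emph{Inductive step \(p>4\).} Apply the inequality being proved, at exponent \(p/2\ge2\), to \(D\):
\[
\mathbb E|D|^{p/2}\le C\,\mathbb E\Bigl(\sum_{i=1}^m\mathbb E[d_i^2\mid\mathcal G_{i-1}]\Bigr)^{p/4}+C\sum_{i=1}^m\mathbb E|d_i|^{p/2}.
\]
The second term is again bounded by \(C\sum_i\mathbb E|\xi_i|^p\), using conditional Jensen and \(\mathbb E|\xi_i|^{p/2}\le(\mathbb E|\xi_i|^p)^{1/2}\cdots\) as before. For the first term, use
\[
\mathbb E[d_i^2\mid\mathcal G_{i-1}]\le\mathbb E[\xi_i^4\mid\mathcal G_{i-1}]\le \mathbb E[\xi_i^2 M^2\mid\mathcal G_{i-1}],\qquad M=\max_i|\xi_i|.
\]

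\textbf{Main obstacle.} The delicate step is controlling \(\mathbb E\bigl(\sum_i\mathbb E[\xi_i^4\mid\mathcal G_{i-1}]\bigr)^{p/4}\). I would handle it in two moves.
\begin{enumerate}
\item Use Lenglart's domination inequality, in the form \(\mathbb E\bigl(\sum_i\mathbb E[\eta_i\mid\mathcal G_{i-1}]\bigr)^{s}\le C_s\,\mathbb E\bigl(\sum_i\eta_i\bigr)^{s}\) for nonnegative adapted \(\eta_i\) and \(s\ge1\) (the dual predictable projection inequality). This reduces the term to \(\mathbb E\bigl(\sum_i\xi_i^4\bigr)^{p/4}\).
\item Interpolate. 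Since \(\sum_i\xi_i^4\le M^2Q\) and \(M^p\le\sum_i|\xi_i|^p\), H\"older and Young's inequality give
\[
\mathbb E\Bigl(\sum_i\xi_i^4\Bigr)^{p/4}\le \varepsilon\,\mathbb E\,Q^{p/2}+C_\varepsilon\sum_i\mathbb E|\xi_i|^p .
\]
\end{enumerate}
Choosing \(\varepsilon\) small, the \(\varepsilon\,\mathbb E\,Q^{p/2}\) term is absorbed into the left side; this is legitimate because \(\mathbb E\,Q^{p/2}<\infty\) by the assumed finite \(p\)-th moments. This closes the induction, and combining with Step~1 gives the lemma.
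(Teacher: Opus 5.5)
Your primary plan, citing the Burkholder--Rosenthal inequality, is exactly what the paper does. Its proof is only a reference to Hall and Heyde (Theorem~2.12 there) and to Jiang's (5.72), with no argument given.

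The self-contained sketch you add goes beyond the paper and is sound. Reducing to the square function and running a dyadic induction through the martingale differences $d_i=\xi_i^2-\mathbb E[\xi_i^2\mid\mathcal G_{i-1}]$ is a standard route. The absorption step is legitimate since $\mathbb E\,Q^{p/2}\le m^{p/2-1}\sum_i\mathbb E|\xi_i|^p<\infty$.

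One attribution is off. The inequality $\mathbb E\bigl(\sum_i\mathbb E[\eta_i\mid\mathcal G_{i-1}]\bigr)^s\le s^s\,\mathbb E\bigl(\sum_i\eta_i\bigr)^s$ for $s\ge1$ is the Garsia--Neveu (dual predictable projection) inequality. Lenglart's domination inequality instead bounds the adapted sum by its compensator, and only for exponents below $1$. The inequality you state is nevertheless true, with a short proof. Put $\widetilde A_i=\sum_{k\le i}\mathbb E[\eta_k\mid\mathcal G_{k-1}]$, which is $\mathcal G_{i-1}$-measurable. Convexity gives $\widetilde A_m^s\le s\sum_i\widetilde A_i^{s-1}\mathbb E[\eta_i\mid\mathcal G_{i-1}]$. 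Predictability lets you replace $\mathbb E[\eta_i\mid\mathcal G_{i-1}]$ by $\eta_i$ under the expectation, and monotonicity plus H\"older finish.

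You could also bypass both this inequality and the absorption by conditional H\"older interpolation:
$\mathbb E[\xi_i^4\mid\mathcal G_{i-1}]\le\mathbb E[\xi_i^2\mid\mathcal G_{i-1}]^{(p-4)/(p-2)}\,\mathbb E[|\xi_i|^p\mid\mathcal G_{i-1}]^{2/(p-2)}$.
Follow this with H\"older over $i$ and then in expectation. That bounds $\mathbb E\bigl(\sum_i\mathbb E[\xi_i^4\mid\mathcal G_{i-1}]\bigr)^{p/4}$ directly by $\mathbb E\,\widetilde Q^{\,p/2}+\sum_i\mathbb E|\xi_i|^p$.
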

\begin{proof}
    This result can be found as \cite[Theorem 2.12]{HallHeyde1980}; see also \cite[(5.72)]{Jiang2010} for a standard reference.
\end{proof}

\begin{lemma}\label{lem:concentration M}
Fix \(L>0\) and \(p\ge2\). For \(n\in\mathbb N\) large enough so that \(q=e^{-2/\sqrt n}\ge1/2\) and \(N=\lfloor nL\rfloor\ge1\), use the notation of Lemma \ref{lem:maximum V}. Then there exists a finite constant \(C_{L,p}\), depending only on \(L\) and \(p\), such that, for every such \(n\) and every composition \(\bs\) of \(N+1\),
    \[
 \mathbb E\left[
\left|
V_{N+1} -1
\right|^p
\right]
\le C_{L,p}(1-q)^{p/2}.
\]
\end{lemma}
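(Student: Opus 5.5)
We will write $V_{N+1}-1$ as a sum of martingale differences and apply Rosenthal's inequality (Lemma \ref{lem:martingale cite}), controlling the resulting conditional variances and $p$-th moments with Lemma \ref{lem:center Y chain} and Lemma \ref{lem:maximum V}.

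\textbf{Martingale decomposition.} With $a_i,b_i$ as in the proof of Lemma \ref{lem:martingale V}, we have $m_{i+1}=a_im_i+1$ and $\mathbb E[Y_{i+1}\mid\mathcal F_i]=a_iY_i+1$. This gives the affine recursion
\[
V_{i+1}-1=b_i(V_i-1)+\frac{\Delta_{i+1}}{m_{i+1}}.
\]
Iterating from $V_1=1$ yields
\[
V_{N+1}-1=\sum_{i=1}^{N}c_i\,\frac{\Delta_{i+1}}{m_{i+1}},\qquad c_i=\prod_{k=i+1}^{N}b_k\in(0,1].
\]
The coefficients $c_i$ are deterministic, so $\xi_i:=c_i\Delta_{i+1}/m_{i+1}$ are martingale differences with respect to $\mathcal G_i=\mathcal F_{i+1}$. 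Since $|c_i|\le 1$, it will suffice to bound both Rosenthal terms with $c_i$ replaced by $1$.

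\textbf{Quadratic variation term.} By Lemma \ref{lem:center Y chain} with exponent $2$, and by $Y_i+1\le 2m_{i+1}(V_i+1)$ (shown in the proof of Lemma \ref{lem:martingale V}), we get
\[
\mathbb E[\xi_i^2\mid\mathcal F_i]\le 8(1-q)\frac{V_i+1}{m_{i+1}}.
\]
Define $w_i=(1-q)/m_{i+1}$. Estimate \eqref{eq:ndtmeum} gives $\sum_i w_i\le C_L$. We then apply Minkowski's inequality in $L^{p/2}$:
\[
\Big\|\sum_i w_i(V_i+1)\Big\|_{p/2}\le \sum_i w_i\,\|V_i+1\|_{p/2}\le C_L\,(C_{L,p}+1),
\]
where the last step uses Lemma \ref{lem:maximum V}. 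Because this bound holds before the overall factor is applied, the first Rosenthal term is at most $C(1-q)^{p/2}$ after raising to the power $p/2$.

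\textbf{Higher moment term.} By Lemma \ref{lem:center Y chain},
\[
\mathbb E|\xi_i|^p\le 2^p(1-q)^{p-1}\,\mathbb E\big[(Y_i+1)^{p-1}\big]/m_{i+1}^p\le C\,(1-q)^{p-1}m_{i+1}^{-1}\,\mathbb E\big[(V_i+1)^{p-1}\big].
\]
Lemma \ref{lem:maximum V} bounds the expectation uniformly. Summing over $i$ with \eqref{eq:ndtmeum} gives $C(1-q)^{p-2}\sum_i w_i\le C(1-q)^{p-2}$. For $p\ge 2$ this is only $O(1)$ at $p=2$, which is weaker than the target $(1-q)^{p/2}$.

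\textbf{Main obstacle: improving the higher moment term.} The crude bound $(1-q)^{p-1}\le 1-q$ must be avoided, and the moment estimate needs to be sharper. I would use the two-point structure directly. From the proof of Lemma \ref{lem:center Y chain} we have $\theta(1-\theta)d\le 1$ and $d\le 2(1-q)(Y_i+1)$. Hence
\[
\mathbb E[|\Delta_{i+1}|^p\mid\mathcal F_i]\le d^{p-1}\le 2^{p-1}(1-q)^{p-1}(Y_i+1)^{p-1}.
\]
Dividing by $m_{i+1}^p$ and using $Y_i+1\le 2m_{i+1}(V_i+1)$ gives
\[
\mathbb E|\xi_i|^p\le C\,(1-q)^{p-1}m_{i+1}^{-1}\,\mathbb E(V_i+1)^{p-1}.
\]
Summing gives $C(1-q)^{p-2}\sum_i w_i$, which is still not enough. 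The refinement is to use $m_{i+1}\ge[i+1]_q$ more carefully:
\[
\sum_i \frac{(1-q)^{p-1}}{m_{i+1}^{p-1}}\le (1-q)^{p-1}\sum_k \frac{(1-q)^{p-1}}{\min(1,k(1-q))^{p-1}}.
\]
For $p>2$ the small-$k$ part is bounded by $C(1-q)^{p-1}$ and the large-$k$ part by $(1-q)^{2p-2}N\le C(1-q)^{2p-4}$. Both are at most $C(1-q)^{p/2}$ once $p\ge 3$. The value $p=2$ needs no higher moment term at all, since Rosenthal reduces to the variance. For $p\in(2,3)$, we interpolate via Lyapunov's inequality from the bound at a larger exponent. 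Combining the two Rosenthal terms then gives $\mathbb E|V_{N+1}-1|^p\le C_{L,p}(1-q)^{p/2}$.
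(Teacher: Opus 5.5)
Your decomposition $V_{N+1}-1=\sum_i c_i\Delta_{i+1}/m_{i+1}$ is correct and is the paper's, divided by $m_{N+1}$: one has $c_i/m_{i+1}=A_i/m_{N+1}$ with $A_i=\prod_{k=i+1}^N a_k$. The gap is the step that replaces $c_i$ by $1$, and it already breaks the quadratic-variation term. After $c_i\le 1$, Lemma \ref{lem:center Y chain} gives $\mathbb E[\xi_i^2\mid\mathcal F_i]\le 8w_i(V_i+1)$ with $w_i=(1-q)/m_{i+1}$. The only factor $1-q$ is already inside $w_i$, so there is no remaining ``overall factor''. Your Minkowski estimate therefore shows only that the first Rosenthal term is $O(1)$, not $O((1-q)^{p/2})$.

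This is not a bookkeeping slip. Take $\bs=(1,\dots,1)$. Then $\mathsf S(i)=1$ for all $i\in\llbracket1,N\rrbracket$, so $m_k=[k]_q$ and
\[
\sum_i w_i=\sum_{k=2}^{N+1}\frac{(1-q)^2}{1-q^k}\ge N(1-q)^2\to 4L .
\]
Since $V_i+1\ge1$, the quantity $\sum_i w_i(V_i+1)$ is itself of order one. In this example the true quadratic variation is small only because $b_k=1-(1-q)/(1-q^{k+1})\le q$, so $c_i\le q^{N-i}$. That geometric decay is exactly what $c_i\le1$ discards.

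The missing ingredient is the weighted bound $\sum_{i=1}^N c_i/m_{i+1}\le 1$, which corresponds to the paper's $\sum_i A_i\le m_{N+1}$. In your notation it telescopes, because $b_k=a_km_k/m_{k+1}=1-1/m_{k+1}$:
\[
\sum_{i=1}^N \frac{1}{m_{i+1}}\prod_{k=i+1}^N\left(1-\frac{1}{m_{k+1}}\right)=1-\prod_{k=1}^N\left(1-\frac{1}{m_{k+1}}\right)\le 1 .
\]
Keep one factor of $c_i$, using $c_i^2\le c_i$ and $c_i^p\le c_i$, and run your Minkowski argument with weights $c_i/m_{i+1}$ in place of $w_i$. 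This gives $\|\sum_i\mathbb E[\xi_i^2\mid\mathcal F_i]\|_{p/2}\le 8(1-q)\sup_i\|V_i+1\|_{p/2}$. It also gives $\sum_i\mathbb E|\xi_i|^p\le C(1-q)^{p-1}\sum_i c_i/m_{i+1}\le C(1-q)^{p/2}$ for every $p\ge2$. Together with Lemma \ref{lem:maximum V}, this proves the statement directly, with no interpolation in $p$.

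Your proposed ``refinement'' of the higher-moment term should be dropped. Your own estimates produce summands $(1-q)^{p-1}/m_{i+1}$, and nothing justifies replacing $m_{i+1}^{-1}$ by $m_{i+1}^{-(p-1)}$. Incidentally, the unweighted bound $C(1-q)^{p-2}$ would already suffice for $p\ge4$, but only once the quadratic term has been fixed as above.
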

\begin{proof}
    Write \(\mathcal F_i=\sigma(Y_1,\dots,Y_i)\), as well as
    \[
a_i=1+(q-1){\bf1}_{\{\mathsf S^{\bs }(i)=1\}},
\qquad
A_i=\prod_{k=i+1}^N a_k,
\qquad
\Delta_{i+1}=Y_{i+1}-\mathbb E[Y_{i+1}\mid\mathcal F_i].
\]
Since \(q\in(0,1)\) and \(Y_k=[k]_q q^{-X^{\boldsymbol{\mathsf S}^{\bs}}(k)}\), we have
\(\mathcal F_i=\sigma(X^{\boldsymbol{\mathsf S}^{\bs}}(1),\dots,
X^{\boldsymbol{\mathsf S}^{\bs}}(i))\). Hence, by the Markov property and
Lemma \ref{lem:Y chain}, $\mathbb E[Y_{i+1}\mid\mathcal F_i]=a_iY_i+1$ and
$m_{i+1}=a_i m_i+1$.
We claim that  
\begin{equation}\label{eq:nrsyth}
Y_{N+1}-m_{N+1}=\sum_{i=1}^N A_i\Delta_{i+1},\qquad
\sum_{i=1}^N A_i\le m_{N+1},
\qquad
A_i m_i\le2m_{N+1} .
\end{equation}
Indeed, we have
\[
Y_{i+1}-m_{i+1}=a_i(Y_i-m_i)+\Delta_{i+1},
\quad\mbox{for } i\in\llbracket1, N\rrbracket.
\]
Since \(Y_1=m_1=1\), unrolling this recursion concludes the first identity in \eqref{eq:nrsyth}.
Iterating \(m_{i+1}=a_i m_i+1\),
\[
m_i=1+\sum_{j=1}^{i-1}\prod_{k=j}^{i-1}a_k.
\]
Since \(a_i\in[q,1]\),
\[
\sum_{i=1}^N A_i-\sum_{i=1}^N a_iA_i
=
\sum_{i=1}^N(1-a_i)\prod_{k=i+1}^N a_k
=
1-\prod_{k=1}^N a_k
\le1.
\]
Combining with \(m_{N+1}=1+\sum_{i=1}^N a_iA_i\), this implies the second claim in \eqref{eq:nrsyth}.
For each \(i\in\llbracket1,N\rrbracket\),
\[
a_iA_i m_i
=
\sum_{j=1}^i\prod_{k=j}^N a_k
\le m_{N+1}.
\]
Since \(a_i\ge q\ge \frac{1}{2}\), we conclude the third claim in \eqref{eq:nrsyth}.
We next claim that 
\begin{equation}\label{eq:momentbrtwrtbrt}
\mathbb E[(Y_i+1)^\rho]\le C_{L,p}m_i^\rho \quad\mbox{for }\rho\in[1,p]\mbox{ and }i\in\llbracket1,N+1\rrbracket.
\end{equation}
Indeed, Lemma \ref{lem:Y chain} gives \(m_i\ge [i]_q\ge1\), and hence
$Y_i+1=m_iV_i+1\le m_i(V_i+1)$.
Since \((V_i+1)^\rho\le C_p(1+V_i^p)\) for \(\rho\in[1,p]\), Lemma
\ref{lem:maximum V} implies \eqref{eq:momentbrtwrtbrt}.

We now apply Lemma \ref{lem:martingale cite} with \(m=N\), with \(\xi_i=A_i\Delta_{i+1}\) for \(i\in\llbracket1,N\rrbracket\), and with \(\mathcal G_i=\mathcal F_{i+1}\) for \(i\in\llbracket0,N\rrbracket\). Indeed, \(\xi_i\) is \(\mathcal G_i\)-measurable, \(\mathbb E[\xi_i\mid\mathcal G_{i-1}]=A_i\mathbb E[\Delta_{i+1}\mid\mathcal F_i]=0\), and \(\xi_i\) is \(p\)-integrable because the chain has finite state space. We obtain
\begin{equation}\label{eq:rbsnyw3}
\begin{split}
 &   \mathbb E|Y_{N+1}-m_{N+1}|^p
\le
C_p\,
\mathbb E\left(
\sum_{i=1}^N
A_i^2\mathbb E[\Delta_{i+1}^2\mid\mathcal F_i]
\right)^{p/2}
+
C_p\sum_{i=1}^N A_i^p\mathbb E|\Delta_{i+1}|^p \\
&\leq C_p(1-q)^{p/2}
\mathbb E\left(\sum_{i=1}^N A_i^2(Y_i+1)\right)^{p/2}
+
C_p\sum_{i=1}^N A_i^p\mathbb E|\Delta_{i+1}|^p,
\end{split}\end{equation}
where the second step follows from Lemma \ref{lem:center Y chain} with \(p=2\).

We next bound the two summands on the right side above. We have
\begin{equation}\label{eq:rbsnyw1}
\begin{aligned}
&\mathbb E\left(\sum_{i=1}^N A_i^2(Y_i+1)\right)^{p/2}
\le
\left(\sum_{i=1}^N A_i\right)^{p/2-1}
\sum_{i=1}^N A_i^{p/2+1}\mathbb E\left[(Y_i+1)^{p/2}\right]  \\
&\le
C_{L,p}m_{N+1}^{p/2-1}
\sum_{i=1}^N A_i(A_i m_i)^{p/2} 
\le
C_{L,p}m_{N+1}^{p/2-1}\sum_{i=1}^N A_im_{N+1}^{p/2}
\le
C_{L,p}m_{N+1}^{p},
\end{aligned}
\end{equation}
where the first step uses Hölder's inequality; the second step uses \eqref{eq:momentbrtwrtbrt} with $\rho=p/2$ and the second claim in \eqref{eq:nrsyth}; the third step uses the third claim in \eqref{eq:nrsyth}; and the last step uses the second claim in \eqref{eq:nrsyth}.
Note
\begin{equation}\label{eq:rbsnyw2}
\begin{aligned}
&\sum_{i=1}^N A_i^p\mathbb E\left|\Delta_{i+1}\right|^p
\leq \sum_{i=1}^N A_i^p2^p(1-q)^{p-1}\mathbb E\left[(Y_i+1)^{p-1}\right]
\le
C_{L,p}(1-q)^{p-1}
\sum_{i=1}^N A_i^p m_i^{p-1} \\  
&=
C_{L,p}(1-q)^{p-1}
\sum_{i=1}^N A_i(A_i m_i)^{p-1}  
\le
C_{L,p}(1-q)^{p-1}m_{N+1}^p ,
\end{aligned}
\end{equation}
where the first step uses Lemma \ref{lem:center Y chain}; the second step uses \eqref{eq:momentbrtwrtbrt} with $\rho=p-1$; the third step changes the form of the equation; and the last step follows from combining the last two claims in \eqref{eq:nrsyth}.

Plugging \eqref{eq:rbsnyw1} and \eqref{eq:rbsnyw2} into \eqref{eq:rbsnyw3},
\[
\begin{aligned}
 \mathbb E|Y_{N+1}-m_{N+1}|^p  
 \le
C_{L,p}(1-q)^{p/2}m_{N+1}^p
+
C_{L,p}(1-q)^{p-1}m_{N+1}^p \le
C_{L,p}(1-q)^{p/2}m_{N+1}^p,
\end{aligned}
\]
where the second step follows from \(p-1\geq p/2\) and \(1-q\le1\).
Since \(V_{N+1}=Y_{N+1}/m_{N+1}\), dividing both sides of the above display by \(m_{N+1}^p\) concludes the proof.
\end{proof}

 As a consequence of Lemma~\ref{lem:concentration M}, we obtain the desired control on all moments of $V_{N+1}$.

\begin{lemma} \label{lem:VN+1 power lambda}
Fix \(L>0\) and \(\lambda\in\mathbb R\). Let  \(q=e^{-2/\sqrt n}\), \(N=\lfloor nL\rfloor\), and use the notation of Lemma \ref{lem:maximum V}. Then 
\[ 
\lim_{n\to\infty}\sup_{\bs}
\left|
\mathbb E\left[
 V_{N+1}^{-\lambda}
\right]-1
\right|
=0 ,
\]
where the supremum is over all compositions \(\bs\) of \(N+1\). 
\end{lemma}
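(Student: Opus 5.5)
The plan is to write $V_{N+1}^{-\lambda}-1$ as a Lipschitz-type function of $V_{N+1}-1$, and then combine the concentration bound of Lemma \ref{lem:concentration M} with the uniform moment bound of Lemma \ref{lem:maximum V}. The only genuine obstacle is the case $\lambda>0$. There $v\mapsto v^{-\lambda}$ blows up at $0$, so we first need a deterministic lower bound on $V_{N+1}$ that is uniform in $n$ and in the composition $\bs$.

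\emph{Step 1: the lower bound.} Since $X(N+1)\ge 0$ and $q<1$, we have $Y_{N+1}=[N+1]_q\,q^{-X(N+1)}\ge [N+1]_q$. By Lemma \ref{lem:Y chain}, $m_{N+1}=\sum_{\ell=0}^r\sigma_\ell q^\ell\le \sum_\ell \sigma_\ell = N+1$. Set $x_n=(N+1)(1-q)$. Then
\[
V_{N+1}\ \ge\ \frac{[N+1]_q}{N+1}=\frac{1-q^{N+1}}{x_n}\ \ge\ \frac{1-e^{-x_n}}{x_n}.
\]
Since $1-q\le 2/\sqrt n$ and $N+1\le nL+1$, we get $x_n\le 2L+2$ for $n\ge1$. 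The function $x\mapsto(1-e^{-x})/x$ is decreasing, so $V_{N+1}\ge c_L:=(1-e^{-(2L+2)})/(2L+2)>0$ deterministically, for every composition $\bs$.

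\emph{Step 2: the elementary inequality.} We show that for all $v\ge c_L$,
\[
\left|v^{-\lambda}-1\right|\le C_{L,\lambda}\,|v-1|\left(1+v^{|\lambda|}\right).
\]
By the mean value theorem, $|v^{-\lambda}-1|=|\lambda|\,\xi^{-\lambda-1}|v-1|$ for some $\xi$ between $1$ and $v$. On $[c_L,\infty)$ we can bound $\xi^{-\lambda-1}$ by $\max(c_L^{-|\lambda|-1},1)(1+v^{|\lambda|})$.

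\emph{Step 3: conclusion.} Taking expectations and applying Cauchy--Schwarz gives
\[
\left|\mathbb E[V_{N+1}^{-\lambda}]-1\right|\le C_{L,\lambda}\,\mathbb E\left[|V_{N+1}-1|^2\right]^{1/2}\,\mathbb E\left[\left(1+V_{N+1}^{|\lambda|}\right)^2\right]^{1/2}.
\]
The second factor is bounded uniformly in $n$ and $\bs$. Indeed, $(1+v^{|\lambda|})^2\le C(1+v^{p})$ with $p=\max(2,2|\lambda|)$, so Lemma \ref{lem:maximum V} applies with this $p$. Lemma \ref{lem:concentration M} with $p=2$ bounds the first factor by $C_{L}(1-q)^{1/2}\le C_L\,n^{-1/4}$, uniformly in $\bs$. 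Hence the supremum over $\bs$ is $O(n^{-1/4})\to0$.

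For small $n$, where the lemmas require $q\ge1/2$, nothing needs to be checked, since only the limit matters. If $\lambda\le 0$, Step 1 is unnecessary, but it does no harm.
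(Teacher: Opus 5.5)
Your Step 1 contains an arithmetic error, and the whole $\lambda>0$ case depends on it. From $1-q\le 2/\sqrt n$ and $N+1\le nL+1$ you only get $x_n=(N+1)(1-q)\le 2L\sqrt n+2/\sqrt n$, not $2L+2$. In fact $x_n\sim 2L\sqrt n\to\infty$, so your argument gives only $V_{N+1}\ge(1-e^{-x_n})/x_n\approx(2L\sqrt n)^{-1}$.

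This cannot be improved. Take $\bs=(N+1)$, so that $r=0$, $\mathsf S\equiv0$ and $m_{N+1}=N+1$. The event $\{X(N+1)=0\}$ then has positive probability, and on it $V_{N+1}=[N+1]_q/(N+1)\to0$ at rate $n^{-1/2}$. Hence there is no deterministic $c_L>0$. The constant in your Step 2 is really of order $n^{(\lambda+1)/2}$, and the $O(n^{-1/4})$ gain from Lemma \ref{lem:concentration M} with $p=2$ in Step 3 cannot absorb it.

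The missing idea is to isolate the bad event and use concentration with a large exponent.
\begin{itemize}
\item On $\{V_{N+1}\ge1/2\}$ your Lipschitz estimate is fine, with constant $\lambda2^{\lambda+1}$, and gives $O((1-q)^{1/2})$.
\item On $\{V_{N+1}<1/2\}$, use only the crude bound $V_{N+1}^{-\lambda}\le(m_{N+1}/[N+1]_q)^{\lambda}\le C_{\lambda,L}(1-q)^{-\lambda}$. Combine it with $\mathbb P(V_{N+1}<1/2)\le2^p\,\mathbb E|V_{N+1}-1|^p\le C_{L,p}(1-q)^{p/2}$, which is Lemma \ref{lem:concentration M} with $p>2\lambda$. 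The product is $O((1-q)^{p/2-\lambda})\to0$.
\end{itemize}
This is exactly how the paper handles $\lambda>0$.

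For $\lambda<0$ your Cauchy--Schwarz route works without any lower bound, because $|v^{a}-1|\le\max(1,a)(1+v^{a})|v-1|$ for all $v\ge0$ and $a>0$. Combined with Lemma \ref{lem:maximum V} for $p=\max(2,2|\lambda|)$, it even gives an explicit $O((1-q)^{1/2})$ rate, unlike the paper's $\eta$-splitting argument. As written, however, your Step 2 still invokes $c_L$ in this case too when $|\lambda|<1$, so you should replace it by that inequality.
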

\begin{proof}
    In this proof, we write $V=V_{N+1}$ for simplicity. We take \(n\) large enough that \(q\ge1/2\) and \(N\ge1\). If \(\lambda=0\), then \(\mathbb E[V^{-\lambda}]=1\). We split the rest of the proof into two cases.

    First, we assume $\lambda<0$. Choose \(p>\max\{2,-\lambda\}\). By Lemmas \ref{lem:concentration M} and \ref{lem:maximum V},
\begin{equation}\label{eq:VN+1 power lambda proof1}
\sup_{\bs}\mathbb E|V-1|^p\le C_{L,p}(1-q)^{p/2}
 ,\qquad \sup_{\bs}\mathbb E[V^p]\le C_{L,p}.
\end{equation}
Fix \(0<\eta<1/2\), and let \(A_\eta=\{|V-1|>\eta\}\). We have 
\begin{equation}\label{eq:VN+1 power lambda proof3}
\mathbb P(A_\eta)
\le
\eta^{-p}\mathbb E|V-1|^p
\le \eta^{-p} C_{L,p}(1-q)^{p/2},
\end{equation}
where the first step uses Markov's inequality and the second step uses \eqref{eq:VN+1 power lambda proof1}. 
Therefore,
\begin{equation}\label{eq:VN+1 power lambda proof2}
\mathbb E\left[V^{-\lambda}\mathbf 1_{A_\eta}\right]
\le
\left(\mathbb E[V^p]\right)^{-\lambda/p}
\mathbb P(A_\eta)^{1+\lambda/p}
\le
C_{L,p}^{-\lambda/p}\mathbb P(A_\eta)^{1+\lambda/p}\leq C_{L,p}^{-\lambda/p}\left( \eta^{-p} C_{L,p}(1-q)^{p/2}\right)^{1+\lambda/p},
\end{equation}
where the first step uses Hölder's inequality and \(-\lambda<p\) and the second step uses \eqref{eq:VN+1 power lambda proof1}.  Hence,
\begin{equation}\label{eq:VN+1 power lambda proof4}
    \begin{split}
        &\sup_{\bs}\mathbb E\left|V^{-\lambda}-1\right|\leq  \sup_{x\in[1-\eta,1+\eta]} \left|x^{-\lambda}-1\right| 
        +\sup_{\bs}\mathbb E\left[V^{-\lambda}\mathbf 1_{A_\eta}\right]+\sup_{\bs}\mathbb{P}\left(A_\eta \right)\\
        &\leq\sup_{x\in[1-\eta,1+\eta]} \left|x^{-\lambda}-1\right| + \eta^{-p} C_{L,p}(1-q)^{p/2}+C_{L,p}^{-\lambda/p}\left( \eta^{-p} C_{L,p}(1-q)^{p/2}\right)^{1+\lambda/p},
    \end{split}
\end{equation}
where the first step splits the expectation into $A_{\eta}$ and $A_{\eta}^{\complement}$ parts and uses the fact that
\[
\left|V^{-\lambda}-1\right|\le  
\sup_{x\in[1-\eta,1+\eta]} \left|x^{-\lambda}-1\right|\quad\mbox{on }A_\eta^{\complement};
\]
the second step uses \eqref{eq:VN+1 power lambda proof3}, \eqref{eq:VN+1 power lambda proof2}.  
Taking first $n\to\infty$ and then $\eta\to0+$ in \eqref{eq:VN+1 power lambda proof4} concludes the proof for $\lambda<0$.

Next, we assume \(\lambda>0\). Choose \(p>\max\{2,2\lambda\}\). On $\{V\ge1/2\}$, we have
\begin{equation}\label{eq:VN+1 power lambda proof5}
\sup_{\bs}
\mathbb E\left[
\left|V^{-\lambda}-1\right|\mathbf 1_{\{V\ge1/2\}}
\right]
\leq \lambda 2^{\lambda+1}\sup_{\bs}\mathbb{E}\left[ |V-1|\right]
\le
C_{\lambda,L}(1-q)^{1/2},
\end{equation}
where the first step uses the fact that the derivative in $x$ satisfies $|\partial_x(x^{-\lambda})|\leq \lambda 2^{\lambda+1}$ for all  $x\geq1/2$, while the second step follows from Cauchy--Schwarz and Lemma \ref{lem:concentration M} with \(p=2\). To bound the expectation on $\{V<1/2\}$, we note that
\[
 m_{N+1}/[N+1]_q 
\leq (N+1)/[N+1]_q 
\le
2 (1-q)(N+1)
\le
8(L+1) (1-q)^{-1}
\]
for sufficient large $n$. Here, the first step uses \(m_{N+1}\leq N+1\), which follows from Lemma \ref{lem:Y chain} since \(m_{N+1}=\sum_{\ell=0}^r\sigma_\ell q^\ell\) and \(\sum_{\ell=0}^r\sigma_\ell=N+1\); the second step uses the fact that for large enough $n$, $1-q^{N+1}\geq1/2$; and the third step uses  that $(1-q)^2(N+1)\le 4(L+1)$ since $1-q\le 2/\sqrt n$ and \(N+1\le   n(L+1)\). Hence
\begin{equation*} 
V^{-\lambda}
=
\left( m_{N+1}/Y_{N+1}\right)^\lambda\leq\left( m_{N+1}/[N+1]_q \right)^\lambda
\le
C_{\lambda,L} (1-q)^{-\lambda},
\end{equation*}
where we used $Y_{N+1}=[N+1]_q q^{-X(N+1)}\ge [N+1]_q$.
Since \(\{V<1/2\}\subseteq\{|V-1|>1/2\}\), Lemma
\ref{lem:concentration M} gives
\begin{equation*} 
\mathbb P(V<1/2)
\le
2^p\mathbb E|V-1|^p
\le
C_{L,p}(1-q)^{p/2}.
\end{equation*}
Combining the last two displayed equations, using \(p/2-\lambda>0\),
\[
\begin{aligned}
&\mathbb E\left[
\left|V^{-\lambda}-1\right|\mathbf 1_{\{V<1/2\}}
\right]
 \le
\mathbb E\left[
\left(V^{-\lambda}+1\right)\mathbf 1_{\{V<1/2\}}
\right] \\ & \le
C_{\lambda,L} (1-q)^{-\lambda}\mathbb P(V<1/2)
+\mathbb P(V<1/2)  \le
C_{\lambda,L,p}
\left((1-q)^{p/2-\lambda}+(1-q)^{p/2}\right) .
\end{aligned}
\]
Combining with \eqref{eq:VN+1 power lambda proof5}, we conclude the proof, since the right sides converge to $0$ as $n\to\infty$.
\end{proof} 

The above results allow us now to derive a general approximation of the Bryc polynomials under a weakly asymmetric scaling.

\begin{lemma}\label{lem:sup Bryc}
        Fix \(L>0\) and \(\lambda\in\mathbb R\). For each \(n\in\mathbb N\), let \(q=e^{-2/\sqrt n}\) and \(N=\lfloor nL\rfloor\). Consider the Bryc polynomial
\(W_{\bs}\) evaluated with parameter \(q\). For a composition $\bs$ of $N+1$, write $M_{\bs}(q)=\sum_{\ell=0}^r \sigma_\ell q^\ell $. Then 
\[
\lim_{n\to\infty}\sup_{\bs}
\left|
\left((1-q)M_{\bs}\left(q\right)\right)^\lambda W_{\bs}\left(q^\lambda\right)-1
\right|=0,
\]where the supremum is over all compositions \(\bs\) of \(N+1\). 
\end{lemma}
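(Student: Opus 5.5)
The plan is to rewrite $W_{\bs}(q^\lambda)$ directly in terms of the normalized endpoint $V_{N+1}$, so that the statement becomes exactly Lemma~\ref{lem:VN+1 power lambda}. By Definition~\ref{thm:prob interpretation of Bryc polynomials},
\[
W_{\bs}(q^\lambda)=\mathbb E\left[q^{\lambda X(N+1)}\right]=\mathbb E\left[\left(q^{-X(N+1)}\right)^{-\lambda}\right],
\]
where $X=X^{\boldsymbol{\mathsf S}^{\bs}}$. Using the notation of Lemma~\ref{lem:Y chain}, we have $q^{-X(N+1)}=Y_{N+1}/[N+1]_q$. Lemma~\ref{lem:Y chain} also gives $m_{N+1}=\mathbb E[Y_{N+1}]=M_{\bs}(q)$, so $Y_{N+1}=M_{\bs}(q)V_{N+1}$. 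Substituting,
\[
W_{\bs}(q^\lambda)=\left(\frac{M_{\bs}(q)}{[N+1]_q}\right)^{-\lambda}\mathbb E\left[V_{N+1}^{-\lambda}\right].
\]
Multiplying by $((1-q)M_{\bs}(q))^\lambda$ and using $(1-q)[N+1]_q=1-q^{N+1}$ yields the exact identity
\[
\left((1-q)M_{\bs}(q)\right)^\lambda W_{\bs}(q^\lambda)=\left(1-q^{N+1}\right)^{\lambda}\,\mathbb E\left[V_{N+1}^{-\lambda}\right],
\]
valid for every composition $\bs$ of $N+1$ and every $q\in(0,1)$.

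To conclude, note that the prefactor does not depend on $\bs$. Since $q^{N+1}=e^{-2(\lfloor nL\rfloor+1)/\sqrt n}\to0$, we have $(1-q^{N+1})^\lambda\to1$. Lemma~\ref{lem:VN+1 power lambda} gives $\sup_{\bs}|\mathbb E[V_{N+1}^{-\lambda}]-1|\to0$, which in particular keeps $\mathbb E[V_{N+1}^{-\lambda}]$ uniformly bounded for large $n$. The elementary bound
\[
|ab-1|\le|a-1|\,|b|+|b-1|
\]
with $a=(1-q^{N+1})^\lambda$ and $b=\mathbb E[V_{N+1}^{-\lambda}]$ then gives the uniform convergence claimed.

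There is no genuine obstacle. All the analytic work, namely the moment bounds and the concentration of $V_{N+1}$ in both the $\lambda>0$ and $\lambda<0$ regimes, is already contained in Lemma~\ref{lem:VN+1 power lambda}. The only points needing care are keeping track of the factor $[N+1]_q$ versus $(1-q)^{-1}$ in the algebraic identity, and noting that $(1-q^{N+1})^\lambda\to1$ holds for either sign of $\lambda$.
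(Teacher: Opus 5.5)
Your proposal is correct and follows the paper's proof: both derive the exact identity $\left((1-q)M_{\bs}(q)\right)^\lambda W_{\bs}(q^\lambda)=\left(1-q^{N+1}\right)^\lambda\mathbb E\left[V_{N+1}^{-\lambda}\right]$ from $Y_{N+1}=[N+1]_q\,q^{-X(N+1)}$ and $\mathbb E[Y_{N+1}]=M_{\bs}(q)$, then conclude via Lemma~\ref{lem:VN+1 power lambda} and $q^{N+1}\to0$. The only difference is cosmetic: you split $|ab-1|$ as $|a-1|\,|b|+|b-1|$ instead of $a|b-1|+|a-1|$, which is equally valid.
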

\begin{proof} We assume \(N\ge1\).
    By Theorem
\ref{thm:prob interpretation of Bryc polynomials},
$W_{\bs}(q^\lambda)=\mathbb E\left[q^{\lambda X^{\boldsymbol{\mathsf S}^{\bs}}(N+1)}\right]$.
Recall that $Y_{i}=[i]_q q^{-X^{\boldsymbol{\mathsf S}^{\bs}}(i)}$, and $V_{i}=Y_{i}/\mathbb{E}[Y_{i}]$ for $i\in\llbracket1,N+1\rrbracket$.  By Lemma \ref{lem:Y chain},  $\mathbb E [Y_{N+1}]=M_{\bs}(q)$, hence
\[  
\left((1-q)M_{\bs}(q)\right)^\lambda W_{\bs}\left(q^\lambda\right)
=
\left(1-q^{N+1}\right)^\lambda
\mathbb E\left[V_{N+1}^{-\lambda}\right].
\]
Therefore, 
 \[
 \begin{aligned}
&\sup_{\bs}
\left|
\left((1-q)M_{\bs}\left(q\right)\right)^\lambda W_{\bs}\left(q^\lambda\right)-1
\right|=
\sup_{\bs}
\left|\left(1-q^{N+1}\right)^\lambda
\mathbb E\left[
V_{N+1}^{-\lambda}
\right]-1\right|\\
&\le
\left(1-q^{N+1}\right)^\lambda\sup_{\bs}
\left|
\mathbb E\left[
V_{N+1}^{-\lambda}
\right]-1\right|
+
\left|\left(1-q^{N+1}\right)^\lambda-1\right|.
\end{aligned}\]
In view of Lemma \ref{lem:VN+1 power lambda} and $q^{N+1}\to0$, the right side above converges to $0$ as $n\to\infty$.
\end{proof}

We have now all tools to derive the desired approximation of the Bryc polynomial for compositions originating from a path contained in $\lrw(n,L,R)$.

\begin{proof}[Proof of Proposition \ref{prop:weak asym Bryc}]
Write \(q=q_n=e^{-2/\sqrt n}\) and $N=\lfloor nL\rfloor$ for \(n\in\mathbb N\). For a composition $\bs$ of $N+1$, recall that $M_{\bs}(q)=\sum_{\ell=0}^r \sigma_\ell q^\ell $. For \(\bg\in\lrw_N\), let
\[
\mathcal{S}(\bg)=\sum_{j=0}^{N}q^{\gamma_j},\qquad B(\bg)=\left((1-q)M_{\bs(\bg)}(q)\right)^\lambda W_{\bs(\bg)}\left(q^\lambda\right).
\]
We claim that
\begin{equation}\label{eq:weakBryc-pathidentity}
\left(2\sqrt n\right)^\lambda q^{-\lambda\min(\bg)}
W_{\bs(\bg)}\left(q^\lambda\right)
=
B(\bg)
\left(
(1-q)/\left(2\sqrt n\right)\cdot \mathcal{S}(\bg)
\right)^{-\lambda}.
\end{equation}
Indeed, this follows from
\[ 
M_{\bs(\bg)}(q)
 =
\sum_{\ell=0}^{\max(\bg)-\min(\bg)}
\#\{j:\gamma_j=\min(\bg)+\ell\}\,q^\ell   
 =
q^{-\min(\bg)}
\sum_{j=0}^{N}q^{\gamma_j}
=
q^{-\min(\bg)}\mathcal{S}(\bg). 
\]
For clarity, we split the rest of the proof into two steps.\\

\noindent\textbf{Step 1.} In this step, we prove the first statement that for fixed $R\geq0$,
\begin{equation}\label{eq:firststatementbsrthbrst}
\lim_{n\to\infty}\sup_{\bg\in\lrw(n,L,R)}
\left|
\left(2\sqrt n\right)^\lambda
q^{-\lambda\min(\bg)}
W_{\bs(\bg)}\left(q^\lambda\right)
-
\left(    \mathcal{S}(\bg)/n\right)^{-\lambda}
\right|=0 ,
\end{equation}
where we recall
\[
\lrw(n,L,R)=
\left\{
\bg\in\lrw_{\lfloor nL\rfloor}:
\left| \gamma_{\lfloor nx\rfloor}\right|
\le Rx\sqrt n\mbox{ for every }x\in[0,L]
\right\}.
\]

We first prove the claim that
\begin{equation}\label{eq:weakBryc-Inbounds}
    L e^{-2LR}
\le
  \mathcal{S}(\bg)/n
\le
(L+1)e^{2LR}\quad\mbox{for }\bg\in\lrw(n,L,R).
\end{equation}
Indeed, for $\bg\in\lrw(n,L,R)$ and $j\in\llbracket0,N\rrbracket$, since $j/n\in [0,L]$,  we have $|\gamma_j|\le R (j/n)\sqrt n\le RL\sqrt n $, and hence  
$e^{-2LR}\le q^{\gamma_j}
=e^{- 2\gamma_j/\sqrt n}{}
\le e^{2LR}$. Combining with $(N+1)/n\in[L,L+1]$, we conclude  \eqref{eq:weakBryc-Inbounds}. 

We next prove the claim that
\begin{equation}\label{eq:weakBryc-powercompare}
\lim_{n\to\infty}\sup_{\bg\in\lrw(n,L,R)}
\left|
\left(
(1-q)/\left(2\sqrt n\right)\cdot \mathcal{S}(\bg)
\right)^{-\lambda}
-
\left(
 \mathcal{S}(\bg)/n
\right)^{-\lambda}
\right|=0.
\end{equation}
Indeed, Taylor's formula applied to \(e^{-x}\) at \(x=0\), with \(x=2/\sqrt n\), implies
$(1-q)/(2\sqrt n)=(1-e^{-2/\sqrt n})/(2\sqrt n)=1/n+O(n^{-3/2})$. Together with \(\mathcal{S}(\bg)\le n(L+1)e^{2LR}\), this implies
\[
\lim_{n\to\infty}\sup_{\bg\in\lrw(n,L,R)}
\left|
(1-q)/\left(2\sqrt n\right)\cdot \mathcal{S}(\bg)
-
\mathcal{S}(\bg)/n
\right|=0.
\]
Since \((1-q)\sqrt n/2\to1\), \eqref{eq:weakBryc-Inbounds} implies that, for all large enough \(n\)
and all \(\bg\in\lrw(n,L,R)\),  $(1-q)/ (2\sqrt n )\cdot \mathcal{S}(\bg)$ and $\mathcal{S}(\bg)/n$ lie in a common compact interval \(K_{L,R}\subseteq(0,\infty)\).  Note \(x\mapsto x^{-\lambda}\) is uniformly continuous and bounded on \(K_{L,R}\),  we conclude \eqref{eq:weakBryc-powercompare}.
By Lemma \ref{lem:sup Bryc},
\[
\limsup_{n\to\infty}\sup_{\bg\in\lrw_N}
\left|B(\bg)-1\right|
\le
\lim_{n\to\infty}\sup_{\bs}
\left|
\left((1-q)M_{\bs}\left(q\right)\right)^\lambda W_{\bs}\left(q^\lambda\right)-1
\right|=0,
\]
where the supremum  is over all compositions of \(N+1\). Hence
$\lim_{n\to\infty}\sup_{\bg\in\lrw_N}\left|B(\bg)-1\right|=0 $.
Combining with \eqref{eq:weakBryc-pathidentity}, \eqref{eq:weakBryc-powercompare}, and
using the boundedness of
\(\left((1-q)/(2\sqrt n)\cdot \mathcal{S}(\bg)\right)^{-\lambda}\) on
\(K_{L,R}\), we conclude \eqref{eq:firststatementbsrthbrst}.\\

\noindent\textbf{Step 2.} In this step, we prove the second statement that there exists a constant \(C<\infty\) depending only on \(L\)
and \(\lambda\), such that for every \(R\ge0\), every \(n\in\mathbb N\),
and every \(\bg\in\lrw(n,L,R)\),
\begin{equation}\label{eq:firststatementbsrthbrst2}
\left(2\sqrt n\right)^\lambda
q^{-\lambda\min(\bg)}
W_{\bs(\bg)}\left(q^\lambda\right)
\le
Ce^{CR}.
\end{equation}
By Lemma \ref{lem:sup Bryc}, there exists \(n_0=n_0(L,\lambda)\) such that $\sup_{\bg\in\lrw_N}|B(\bg)-1|\le 1$
for all $n\ge n_0$.
For \(n<n_0\), there are only finitely many possible \((n,\bs)\), and Theorem \ref{thm:prob interpretation of Bryc polynomials} gives
\(W_{\bs}(q^\lambda)=\mathbb E[q^{\lambda X^{\boldsymbol{\mathsf S}^{\bs}}(N+1)}]<\infty\). Note \(M_{\bs}(q)>0\), enlarging the constant,  
  $B(\bg)\le C_{\lambda,L}$ for every $n\in\mathbb N$ and  $\bg\in\lrw_N$.
Combining with \eqref{eq:weakBryc-pathidentity},
\begin{equation}\label{eq:weakBryc-pathbound}
\left(2\sqrt n\right)^\lambda q^{-\lambda\min(\bg)}
W_{\bs(\bg)}\left(q^\lambda\right)
\le
C_{\lambda,L}
\left(
(1-q)/(2\sqrt n)\cdot \mathcal{S}(\bg)
\right)^{-\lambda}\quad\mbox{for all }\bg\in\lrw_N.
\end{equation}
Since \(q=e^{-2/\sqrt n}\), the inequalities \(1-e^{-x}\le x\) and
\(1-e^{-x}\ge e^{-2}x\) for \(x\in[0,2]\), applied with
\(x=2/\sqrt n\), give
$e^{-2}\le (1-q)\sqrt n/2\le 1 $.
Combining this with \eqref{eq:weakBryc-Inbounds}, for every
\(\bg\in\lrw(n,L,R)\),
\[
e^{-2}L e^{-2LR}
\le
(1-q)/(2\sqrt n)\cdot \mathcal{S}(\bg)
\le
(L+1)e^{2LR}.
\]
Therefore,
\[
\left((1-q)/(2\sqrt n)\cdot \mathcal{S}(\bg)\right)^{-\lambda}
\le C_{\lambda,L}e^{2|\lambda|LR}.
\]
Plugging this into \eqref{eq:weakBryc-pathbound} and enlarging the constant $C_{\lambda,L}$ if necessary gives \eqref{eq:firststatementbsrthbrst2}. 
\end{proof}

\section{Weakly asymmetric scaling limits for the stationary distribution of the open ASEP}\label{sec:MainProof}

We have now all tools in order to show our main result Theorem~\ref{thm:open KPZ equation scaling} on the stationary distribution of the open ASEP under a weakly asymmetric scaling, which we restate here for clarity.

\begin{theorem}\label{thm:open KPZ equation scaling again}
Fix \(L>0\) and \(u,v\in\mathbb R\).  For \(n\in\mathbb N\), set
\[
N_n=\lfloor nL\rfloor,
\qquad
q_n=e^{-2/\sqrt n},
\qquad
\a_n=q_n^u,
\qquad
\b_n=q_n^v .
\]
Define 
\[
\alpha_n:=\frac{1}{1+\a_n},
\qquad
\beta_n:=\frac{1}{1+\b_n},
\qquad
\gamma_n:=q_n(1-\alpha_n),
\qquad
\delta_n:=q_n(1-\beta_n).
\]
For all sufficiently large \(n\), so that \(N_n\ge1\), let
\(\tau=(\tau_1,\ldots,\tau_{N_n})\sim
\mu_{N_n}^{(q_n,\a_n,\b_n)}\).  For \(0\le j\le N_n\) and \(0\le x\le L\), 
set
\begin{equation}\label{eq:height functions}
H_n^\tau(j)=\sum_{i=1}^j\tau_i,
\qquad
h_n^\tau(x)
=
\frac{2H_n^\tau(\lfloor nx\rfloor)-\lfloor nx\rfloor}{\sqrt n}.
\end{equation}
We view \(h_n^\tau\) as a random element of \(D([0,L],\mathbb R)\) equipped
with the Skorokhod \(J_1\) topology.  Then, as \(n\to\infty\),
$h_n^\tau
\Rightarrow
B_1$
weakly in \(D([0,L],\mathbb R)\), where \(B_1\) is the first coordinate under
\(\mathbb P_{\mathrm{KPZ}}^{u,v}\) from Definition \(\ref{def:KPZstat}\).
\end{theorem}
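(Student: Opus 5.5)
We would work directly from the two-layer representation in Theorem~\ref{thm:characterization stationary measure ASEP}. The plan is to rewrite the law of $(\bg,\tau)$ as a reweighting of two independent simple random walks, identify the limit of the reweighting factor using Proposition~\ref{prop:weak asym Bryc}, and close the argument with an invariance principle and uniform integrability.

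\textbf{Step 1: coupling with two random walks.} Let $\varepsilon^1_i,\varepsilon^2_i$, $i\in\llbracket1,N_n\rrbracket$, be i.i.d.\ uniform $\pm1$ signs, and set $W^k(j)=\sum_{i\le j}\varepsilon^k_i$. Then $\gamma_j:=(W^1(j)+W^2(j))/2$ is a lazy random walk path, and its law under this product measure $\mathbb Q_n$ is $4^{-N_n}2^{H(\bg)}$. Given $\bg$, flat steps correspond to $\varepsilon^1_i=-\varepsilon^2_i$, so $\varepsilon^1_i$ is a fair sign independent of everything else; it can therefore play the role of $2\kappa_i-1$. On up and down steps, $\varepsilon^1_i$ equals the step of $\bg$. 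Hence $2\tau_i-1=\varepsilon^1_i$, and $h^\tau_n(x)=W^1(\lfloor nx\rfloor)/\sqrt n$.

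By \eqref{eq:PNab}, the law of $(W^1,W^2)$ under the stationary measure is $\mathbb Q_n$ reweighted by
\[
F_n:=\b_n^{\gamma_{N}}(\a_n\b_n)^{-\min\bg}W_{\bs(\bg)}(\a_n\b_n).
\]
This weight may be multiplied by the constant $(2\sqrt n)^{\lambda}$, where $\lambda:=u+v$, without changing the law. Since $\a_n\b_n=q_n^{\lambda}$ and $\b_n^{\gamma_N}=e^{-2v\gamma_N/\sqrt n}$, Proposition~\ref{prop:weak asym Bryc} suggests
\[
(2\sqrt n)^\lambda F_n\approx G_n:=\Big(\frac1n\sum_{j=0}^{N}e^{-(W^1(j)+W^2(j))/\sqrt n}\Big)^{-u-v}e^{-v(W^1(N)+W^2(N))/\sqrt n}.
\]

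\textbf{Step 2: convergence of the weighted expectations.} By Donsker's theorem, $(W^1,W^2)(\lfloor n\cdot\rfloor)/\sqrt n\Rightarrow(B_1,\tilde B_2)$, two independent standard Brownian motions. We then set $B_2:=-\tilde B_2$, which is again a standard Brownian motion. For bounded continuous $\Phi$ on $D([0,L])$ we have
\[
\mathbb E_{\mu}[\Phi(h_n^\tau)]=\frac{\mathbb E_{\mathbb Q_n}[\Phi(W^1/\sqrt n)(2\sqrt n)^\lambda F_n]}{\mathbb E_{\mathbb Q_n}[(2\sqrt n)^\lambda F_n]}.
\]
We would use a Skorokhod coupling under which the path convergence holds almost surely, so that $\max_j|W^k(j)|/\sqrt n$ is a.s.\ bounded. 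Along a given realization, the path then lies in $\lrw(n,L,R)$ for a random $R$, after possibly controlling small $x$; alternatively, we would reprove the first part of Proposition~\ref{prop:weak asym Bryc} with the sup-norm bound $\max_j|\gamma_j|\le R\sqrt n$, whose proof only uses \eqref{eq:weakBryc-Inbounds}. This gives $(2\sqrt n)^\lambda F_n-G_n\to0$ and $G_n\to G$, where
\[
G:=\Big(\int_0^L e^{-(B_1-B_2)}ds\Big)^{-u-v}e^{-v(B_1(L)-B_2(L))}.
\]
Both convergences hold almost surely by continuity of the Riemann sum functional. Given uniform integrability, both numerator and denominator converge, and the ratio is exactly $\mathbb E[\Phi(B_1)]$ under \eqref{eq:PstatKPZ}. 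The denominator limit $\mathcal Z^{u,v}_{\mathrm{KPZ}}$ is positive since $G>0$.

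\textbf{Step 3: uniform integrability (the main obstacle).} The concern is paths with large excursions, where the truncated estimate of Proposition~\ref{prop:weak asym Bryc} does not apply. We would use instead the bound \eqref{eq:weakBryc-pathbound}, which holds for all $\bg\in\lrw_N$:
\[
(2\sqrt n)^\lambda F_n\le C\big((1-q)(2\sqrt n)^{-1}\mathcal S(\bg)\big)^{-\lambda}e^{-2v\gamma_N/\sqrt n}.
\]
Since $\mathcal S(\bg)/n\ge Le^{-2M_n}$, where $M_n:=\max_j|\gamma_j|/\sqrt n$, and $(1-q)\sqrt n/2\ge e^{-2}$, this gives $(2\sqrt n)^\lambda F_n\le Ce^{C M_n}$. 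The random walk maximal inequality (Doob's inequality applied to $e^{\theta W}$) gives $\sup_n\mathbb E_{\mathbb Q_n}[e^{2CM_n}]<\infty$. This second-moment bound yields uniform integrability of $(2\sqrt n)^\lambda F_n$, completing Step 2. Since weak convergence in $D$ was tested against bounded continuous $\Phi$ and the limit is continuous, this gives $h^\tau_n\Rightarrow B_1$.
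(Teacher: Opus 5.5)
Your proposal is correct and follows essentially the paper's route: two independent simple random walks reweighted by the Bryc weight, the uniform estimate of Lemma~\ref{lem:sup Bryc} combined with $M_{\bs(\bg)}(q)=q^{-\min(\bg)}\sum_j q^{\gamma_j}$ on sup-norm-bounded paths, and exponential moments of the walk maximum for uniform integrability. Your Skorokhod coupling and $L^2$ bound are only a cosmetic substitute for the paper's estimate $\mathbb E_{\mathbb P}|\Psi_n-\Theta_n|\to0$ combined with the extended continuous mapping theorem.

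In Step~2, only your sup-norm alternative works. Membership in $\lrw(n,L,R)$ forces $|\gamma_j|\le Rj/\sqrt n$, hence $\gamma_j=0$ for $1\le j<\sqrt n/R$, so typical paths never lie in that class. This is exactly why the paper's proof bypasses Proposition~\ref{prop:weak asym Bryc} and applies Lemma~\ref{lem:sup Bryc} directly on $\{T_n\le R\}$.
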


\begin{proof}
To simplify notation, we write in the following \(N=N_n\) and \(q=q_n\). Since finite values of \(n\) do not affect the weak convergence, we can assume $N\geq1$. 
Let $S_1=\{S_1(j)\}_{j=0}^N$, $S_2=\{S_2(j)\}_{j=0}^N$ be independent simple symmetric random walks started at $S_1(0)=S_2(0)=0$. Let
$\bg=(\gamma_0,\dots,\gamma_N)=\left\{(S_1(j)-S_2(j))/2\right\}_{j=0}^N$.
Then \(\bg\in\lrw_N\) and 
$\mathbb P\left((S_1-S_2)/2=\bg\right)=4^{-N}2^{H(\bg)}$ for fixed \(\bg\in\lrw_N\).
Consider the functional $\Phi:\lrw_N\to(0,\infty)$,
\[
\Phi(\bg)
:=
\b_n^{\gamma_N}
(\a_n\b_n)^{-\min(\bg)}
W_{\bs(\bg)}(\a_n\b_n).
\] 
This quantity is finite and strictly positive as by Theorem \ref{thm:prob interpretation of Bryc polynomials},
$W_{\bs(\bg)}(q^{u+v})
=
\mathbb E\left[q^{(u+v)X^{\boldsymbol{\mathsf S}^{\bs(\bg)}}(N+1)}\right]\in(0,\infty)$,
since \(q^{u+v}>0\) and \(X^{\boldsymbol{\mathsf S}^{\bs(\bg)}}(N+1)\) takes only finitely many values.
Define the probability measure \(\mathbb Q\) by
\[
\frac{d \mathbb Q}{d\mathbb P}(S_1,S_2)\propto\Phi\left((S_1-S_2)/2\right).
\]
Under \(\mathbb Q\), the induced law of \(\bg=(S_1-S_2)/2\) is \(\mathbb P^{(q,\a_n,\b_n)}\) from \eqref{eq:PNab}.  Since the Radon--Nikodym derivative depends only on \(\bg\), conditional on \(\bg\) the choices of the \(S_1\)-increments on the horizontal steps of \(\bg\) remain independent fair signs. Therefore Theorem \ref{thm:characterization stationary measure ASEP} shows that
\(\tau=(\tau_1,\dots,\tau_N)\in\{0,1\}^N\), defined by
\[
\tau_j= (1+S_1(j)-S_1(j-1))/2,\qquad j\in\llbracket1,N\rrbracket,
\]
has law \(\mu_{N}^{(q_n,\a_n,\b_n)}\).
Let $D_{1}(x)=S_1(\lfloor nx\rfloor)/\sqrt n$ and $D_{2}(x)=S_2(\lfloor nx\rfloor)/\sqrt n$ for $x\in[0, L]$. 
Then the height functions \eqref{eq:height functions} are
\begin{equation}\label{eq:height again}
H_n^\tau(j)= (j+S_1(j))/2,
\qquad
h_n^\tau(x)=D_1(x).
\end{equation}

Define
\[
\Psi_n\left(D_1,D_2\right)=(2\sqrt n)^{u+v} q^{-(u+v)\min(\bg)} e^{-v(D_1(L)-D_2(L)) } W_{\bs(\bg)}(q^{u+v}),
\]
where $\bg=(S_1-S_2)/2\in\lrw_N$.  Since
\(\Psi_n(D_1,D_2)=(2\sqrt n)^{u+v}\Phi(\bg)\), the change of measure is
\[
\frac{d \mathbb Q}{d\mathbb P}(S_1,S_2)
=
\frac{\Psi_n\left(D_1,D_2\right)}
{\mathbb E_{\mathbb P}\left[\Psi_n\left(D_1,D_2\right)\right]} , 
\]
where we write $\mathbb E_{\mathbb P}$ for the expectation with respect to $\mathbb P$. 
We write $J_n(f,g)=\frac1n\sum_{j=0}^{N }e^{-(f(j/n)-g(j/n))}$ for $f,g\in D([0,L],\mathbb R)$.
Define the functional
\[
\Theta_n\left(D_1,D_2\right)
=
e^{-v(D_1(L)-D_2(L))}J_n(D_{1},D_{2})^{-(u+v)} .
\]
Let 
$T_n(D_1,D_2)=\sup_{0\le x\le L}|D_1(x)-D_2(x)|$.
For clarity, we split the rest of the proof into three steps.\\

\noindent\textbf{Step 1.} In this step, we conclude the proof of the theorem assuming 
\begin{align}
    &\label{eq:claim exponential moments}\sup_n\mathbb E_{\mathbb P}\left[e^{\theta T_n}\right]<\infty
\qquad\mbox{for every }\theta<\infty ,\\
&\label{eq:phithetaclose}\lim_{n\to\infty}\mathbb E_{\mathbb P}\left[
\left|\Psi_n(D_1,D_2)-\Theta_n(D_1,D_2)\right|
\right]=0 .
\end{align} 
By Donsker's theorem, under \(\mathbb P\), $(D_1,D_2)\Rightarrow(B_1, B_2)$ in \(D([0,L],\mathbb R)^2\), where \(B_1, B_2\) are
independent standard Brownian motions. 
Note that, if \(f_n\to f\) and \(g_n\to g\) in \(D([0,L],\mathbb R)\) and \(f,g\) are continuous, then the \(J_1\)-convergence is uniform on \([0,L]\). Hence \(e^{-(f_n-g_n)}\to e^{-(f-g)}\) uniformly, and
\[
\lim_{n\to\infty}\frac1n\sum_{j=0}^{\lfloor nL\rfloor}
e^{-(f_n(j/n)-g_n(j/n))}
=
\int_0^L e^{-(f(s)-g(s))}\,ds ,
\]
because the difference from the corresponding Riemann sum for the continuous function \(e^{-(f-g)}\) tends to \(0\), and that Riemann sum converges to the displayed integral.  The endpoint evaluations also converge, again because the \(J_1\)-convergence to a continuous limit is uniform.  By the extended continuous mapping theorem, 
\[
(D_1,D_2,\Theta_n)
\Rightarrow
(B_1, B_2,\Lambda) \quad \text{where } \
\Lambda
=
e^{-v(B_1(L)-B_2(L))}
\left(
\int_0^L e^{-(B_1(s)-B_2(s))}\,ds
\right)^{- u-v}.
\]
The bound \eqref{eq:claim exponential moments} implies that \(\{\Theta_n\}_{n\ge1}\) is uniformly integrable.  Indeed, we have that
\[
L e^{-T_n}\le J_n(D_1,D_2)\le (L+1)e^{T_n},
\qquad
|D_1(L)-D_2(L)|\le T_n,
\]
and hence, for a constant \(C<\infty\) depending only on \(L,u,v\), we see $0\le \Theta_n(D_1,D_2)\le C e^{(|u|+2|v|)T_n}$.
\[
0\le \Theta_n(D_1,D_2)\le C e^{(|u|+2|v|)T_n}.
\]
Choosing \(\theta>|u|+2|v|\) in \eqref{eq:claim exponential moments} gives
\[
\sup_n\mathbb E_{\mathbb P}\left[
e^{(|u|+2|v|)T_n}\mathbf 1_{\{T_n>R\}}
\right]
\le
e^{-(\theta-|u|-2|v|)R}
\sup_n\mathbb E_{\mathbb P}\left[e^{\theta T_n}\right]
\to0,
\]
so \(\{\Theta_n\}_{n\ge1}\) is uniformly integrable.  If \(F:D([0,L],\mathbb R)\to\mathbb R\) is bounded and continuous, then the continuous mapping theorem applied to \((D_1,D_2,\Theta_n)\) gives \(F(D_1)\Theta_n\Rightarrow F( B_1)\Lambda\).  Moreover, \(\{F(D_1)\Theta_n\}_{n\ge1}\) is uniformly integrable, since \(|F(D_1)\Theta_n|\le \|F\|_\infty\Theta_n\).  Hence, by convergence in
distribution together with uniform integrability,
\[
\lim_{n\to\infty}\mathbb E_{\mathbb P}\left[F(D_1)\Theta_n(D_1,D_2)\right]
=
\mathbb E_{\mathrm{BM}}^{0,0}\left[F( B_1)\Lambda\right].
\]
Since \[
\left|
\mathbb E_{\mathbb P}\left[F(D_1)\Psi_n(D_1,D_2)\right]
-
\mathbb E_{\mathbb P}\left[F(D_1)\Theta_n(D_1,D_2)\right]
\right|
\le
\|F\|_\infty
\mathbb E_{\mathbb P}\left[
\left|\Psi_n(D_1,D_2)-\Theta_n(D_1,D_2)\right|
\right]
\to0,
\]
we have
\[
\lim_{n\to\infty}\mathbb E_{\mathbb P}\left[F(D_1)\Psi_n(D_1,D_2)\right]=\mathbb E_{\mathrm{BM}}^{0,0}\left[F( B_1)\Lambda\right].
\]
Taking \(F\equiv1\),
\[
\lim_{n\to\infty}\mathbb E_{\mathbb P}\left[\Psi_n(D_1,D_2)\right]
=
\mathbb E_{\mathrm{BM}}^{0,0}[\Lambda]
=
\mathcal Z_{\mathrm{KPZ}}^{u,v}.
\] 
In view of \eqref{eq:height again},  for every bounded continuous \(F:D([0,L],\mathbb R)\to\mathbb R\),
\[ 
\mathbb E\left[F(h_n^\tau)\right]
=
\mathbb E_{\mathbb Q}\left[F(D_1)\right]  
=
\frac{
\mathbb E_{\mathbb P}\left[F(D_1)\Psi_n(D_1,D_2)\right]
}{
\mathbb E_{\mathbb P}\left[\Psi_n(D_1,D_2)\right]
} 
\to
\frac{
\mathbb E_{\mathrm{BM}}^{0,0}\left[F( B_1)\Lambda\right]
}{
\mathcal Z_{\mathrm{KPZ}}^{u,v}
}
=
\mathbb E_{\mathrm{KPZ}}^{u,v}\left[F(B_1)\right].
\]
This proves \(h_n^\tau\Rightarrow B_1\) in \(D([0,L],\mathbb R)\).\\

\noindent\textbf{Step 2.} 
In this step, we prove \eqref{eq:claim exponential moments}, i.e.~$
\sup_n\mathbb E_{\mathbb P}\left[e^{\theta T_n}\right]<\infty$
for every $\theta<\infty$.
Indeed, let \(M_n(S)=\max_{0\le j\le N}|S(j)|/\sqrt n\) for a simple symmetric random walk \(S\).  For \(a>0\), Doob's maximal inequality applied to the nonnegative submartingales \(e^{aS(j)}\) and \(e^{-aS(j)}\), together with
\[
\mathbb E e^{aS(N)}=(\cosh a)^N\le e^{a^2N/2},
\]
gives, for \(t>0\), using the reflection principle for the symmetric simple random walk and the exponential Markov inequality,
\[
\mathbb P\left(M_n(S)\ge t\right)
\le
2\inf_{a>0}\exp\left(-at\sqrt n+a^2N/2\right)
\le
2e^{-t^2/(2L)} ,
\]
where \(N=\lfloor nL\rfloor\le nL\).  Therefore, for every \(\theta>0\),
\[
\sup_n \mathbb E \left[e^{\theta M_n(S)}\right]
\le
1+2\theta\int_0^\infty e^{\theta t-t^2/(2L)}\,dt<\infty .
\]
Since
\[
T_n(D_1,D_2)\le M_n(S_1)+M_n(S_2),
\]
and \(S_1,S_2\) are independent, we conclude for \(\theta>0\); for \(\theta\le0\) this follows from \(T_n\ge0\).\\

\noindent\textbf{Step 3.}
In this step, we prove the claim \eqref{eq:phithetaclose}, which we restate as
\begin{equation}\label{eq:phithetaclose again}
\lim_{n\to\infty}\mathbb E_{\mathbb P}\left[
\left|\Psi_n(D_1,D_2)-\Theta_n(D_1,D_2)\right|
\right]=0 .
\end{equation}
For \(\bg=(S_1-S_2)/2\in\lrw_N\), write
\[
S(\bg)=\sum_{j=0}^N q^{\gamma_j},
\qquad
A_n(\bg)=
(2\sqrt n)^{u+v} q^{-(u+v)\min(\bg)}
W_{\bs(\bg)}(q^{u+v}).
\]
Then \(J_n(D_1,D_2)=S(\bg)/n\) and
\[
\Psi_n(D_1,D_2)=e^{-v(D_1(L)-D_2(L))}A_n(\bg),
\qquad
\Theta_n(D_1,D_2)=e^{-v(D_1(L)-D_2(L))}J_n(D_1,D_2)^{-u-v}.
\]
We write \(M_{\bs}(q)=\sum_{\ell=0}^r\sigma_\ell q^\ell\) for a composition \(\bs\) of \(N+1\). Then $M_{\bs(\bg)}(q)=q^{-\min(\bg)}S(\bg)$. Define 
\[
C_n(\bg)=
\left((1-q)M_{\bs(\bg)}(q)\right)^{u+v}
W_{\bs(\bg)}(q^{u+v}),
\]
then 
\begin{equation}\label{eq:yjmfuujkf1}
A_n(\bg)=C_n(\bg)\left((1-q)/(2\sqrt n)\cdot S(\bg)\right)^{-u-v}=C_n(\bg)\left(\left( (1-q)\sqrt n/2 \right)\cdot J_n(D_1,D_2)\right)^{-u-v}.
\end{equation}
By Lemma \ref{lem:sup Bryc}, 
\begin{equation}\label{eq:yjmfuujkf2}
    \lim_{n\to\infty}\sup_{\bg\in\lrw_N}|C_n(\bg)-1|=0,\qquad \sup_{n}\sup_{\bg\in\lrw_N} C_n(\bg)<\infty  ,
\end{equation} 
where the second claim follows from the first one, since there exists \(n_0\) such that \(\sup_{\bg\in\lrw_N}C_n(\bg)\le2\) for all \(n\ge n_0\), and for \(n<n_0\), there are only finitely many possible paths \(\bg\).
For each fixed \(R<\infty\),  
\[
L e^{-R}\le J_n(D_1,D_2)\le (L+1)e^R
\] on the event $\{T_n\leq R\}$.
Combining the last display and \eqref{eq:yjmfuujkf1}, \eqref{eq:yjmfuujkf2}, in view of \((1-q)\sqrt n/2\to1\),
\[
\left|A_n(\bg)-J_n(D_1,D_2)^{-u-v}\right|\to0
\]
uniformly on $\{T_n\leq R\}$. Note  that on \(\{T_n\leq R\}\), the factor \(e^{-v(D_1(L)-D_2(L))}\) is bounded by \(e^{|v|R}\), and so
\begin{equation}\label{eq:yjmfuujkf3}
\lim_{n\to\infty}\mathbb E_{\mathbb P}\left[\left|\Psi_n(D_1,D_2)-\Theta_n(D_1,D_2)\right|\mathbf 1_{\{T_n\leq R\}}\right]=0
\end{equation} for every $R<\infty$.
It remains to control the expectation on the complement.
Since \(N=\lfloor nL\rfloor\),
\[
L e^{-T_n}
\le
J_n(D_1,D_2)
\le
(L+1)e^{T_n}.
\]
Since \(e^{-2}\le (1-q)\sqrt n/2\le1\), \eqref{eq:yjmfuujkf1}
and \eqref{eq:yjmfuujkf2}  imply, for a constant \(C<\infty\) depending only on \(L,u,v\),
\[
0\le A_n(\bg)\le C e^{|u+v|T_n},
\qquad
0\le J_n(D_1,D_2)^{-u-v}\le C e^{|u+v|T_n}.
\]
Since \(|D_1(L)-D_2(L)|\le T_n\), it follows that
 \[
0\le \Psi_n(D_1,D_2)+\Theta_n(D_1,D_2)
\le
 C e^{(|u|+2|v|)T_n}.
 \] 
  Choose \(\theta>|u|+2|v|\), then \eqref{eq:claim exponential moments} gives
  \[
  \sup_n
 \mathbb E_{\mathbb P}\left[
  e^{(|u|+2|v|)T_n}\mathbf 1_{\{T_n>R\}}
 \right]
  \le
  e^{-(\theta-(|u|+2|v|))R}
  \sup_n\mathbb E_{\mathbb P}\left[e^{\theta T_n}\right].
 \]
  Therefore,
 \begin{equation}\label{eq:yjmfuujkf5}
  \lim_{R\to\infty}\sup_n
  \mathbb E_{\mathbb P}\left[
  e^{(|u|+2|v|)T_n}\mathbf 1_{\{T_n>R\}}
  \right]=0.
  \end{equation}
  Combining \eqref{eq:yjmfuujkf3} and \eqref{eq:yjmfuujkf5} concludes the proof of \eqref{eq:phithetaclose again}.
\end{proof}

\begingroup
\let\clearpage\relax
\let\newpage\relax
\bibliographystyle{goodbibtexstyle.bst}
\bibliography{RWApproximation}
\endgroup

\end{document}